\newcommand{\bb}{\mathbb}
\newcommand{\Z}{\bb Z}
\newcommand{\R}{\bb R}
\newcommand{\RR}{\bb R}
\newcommand{\xx}{\mathbf{x}}
\newcommand{\pp}{\mathbf{p}}
\newcommand{\vv}{\mathbf{v}}
\newcommand{\s}{\bb S}
\newcommand{\D}{\mathcal D}
\newcommand{\Pa}{\mathcal P}
\newcommand{\wrt}[1]{\mathrm{d}{#1}}
\newcommand{\supp}{\operatorname{supp}}
\newcommand{\spn}{\operatorname{span}}
\newcommand{\Euc}{\operatorname{Euc}}
\newcommand{\vol}{\operatorname{vol}}
\newcommand{\SL}{\operatorname{SL}}
\newcommand{\SO}{\operatorname{SO}}
\newcommand{\Id}{\operatorname{Id}}
\newcommand{\La}{\Lambda}
\newtheorem{Theorem}{Theorem}
\numberwithin{Theorem}{section}
\newtheorem{theo}[Theorem]{Theorem}
\newtheorem{coro}[Theorem]{Corollary}
\newtheorem{lemm}[Theorem]{Lemma}
\newtheorem*{lemma*}{Lemma}
\newtheorem*{question*}{Question}
\newtheorem*{theorem*}{Theorem}
\numberwithin{equation}{section}
\theoremstyle{remark}
\newtheorem{rema}[Theorem]{\sc Remark}
\begin{document}
\title[Spherical Averages and Spiraling]{Spiraling of approximations and spherical averages of Siegel transforms}
\author{Jayadev~S.~Athreya}
\author{Anish Ghosh}
\author{Jimmy Tseng}

\address{J.S.A.: Department of Mathematics, University of Illinois Urbana-Champaign, 1409 W. Green Street, Urbana, IL 61801, USA}
\email{jathreya@illinois.edu}
\address{A.G.: School of Mathematics, Tata Institute of Fundamental Research, Homi Bhabha Road, Mumbai 400005 India}
\email{ghosh@math.tifr.res.in}
\address{J.T.:  School of Mathematics, University of Bristol, University Walk, Bristol, BS8 1TW UK}
\email{j.tseng@bristol.ac.uk}

    \thanks{J.S.A.\ partially supported by NSF grant DMS 1069153, and NSF grants DMS 1107452, 1107263, 1107367 ``RNMS: GEometric structures And Representation varieties" (the GEAR Network).}
    \thanks{A.G. partially supported by the Royal Society}
    \thanks{J.T. acknowledges the research leading to these results has received funding from the European Research Council under the European Union's Seventh Framework Programme (FP/2007-2013) / ERC Grant Agreement n. 291147.}
    
  \subjclass[2000]{37A17, 11K60, 11J70}
\keywords{Diophantine approximation, equidistribution, Siegel transforms}  
    
\begin{abstract}  We consider the question of how approximations satisfying Dirichlet's theorem spiral around vectors in $\R^d$. We give pointwise almost everywhere results (using only the Birkhoff ergodic theorem on the space of lattices). In addition, we show that for \emph{every} unimodular lattice, on average, the directions of approximates spiral in a uniformly distributed fashion on the $d-1$ dimensional unit sphere. For this second result, we adapt a very recent proof of Marklof and Str\"ombergsson~\cite{MS3} to show a spherical average result for Siegel transforms on $\SL_{d+1}(\bb R)/\SL_{d+1}(\bb Z)$.  Our techniques are elementary.  Results like this date back to the work of Eskin-Margulis-Mozes~\cite{EMM} and Kleinbock-Margulis~\cite{KM} and have wide-ranging applications. We also explicitly construct examples in which the directions are not uniformly distributed. 
\end{abstract}

\maketitle

%

\section{Introduction}\label{sec:intro}\noindent It is a corollary of a classical theorem of Dirichlet \cite{Dirichlet}, that, for every ${\bf x} \in \bb R^d$ ($d \geq 1$), there exist infinitely many $({\bf p}, q) \in \bb Z^d \times \bb N$ such that
\begin{equation}\label{dirichlet}
\|q{\bf x} - {\bf p}\| < C_d|q|^{-1/d}.
\end{equation}
Here,  $\|~\|$ denotes the Euclidean norm on $\bb R^d$ and $C_d$ is a constant depending only on $d$. If the $L^{\infty}$-norm is used in (\ref{dirichlet}), then $C_d$ can be taken to be $1$ for all $d$. In this paper, we are interested in the distribution of the \emph{directions} of the approximates $({\bf p}, q) \in \bb Z^d \times \bb N$ approaching $\bf x$, that is, the quantities $$\theta(\pp, q) := \frac{q{\bf x} - {\bf p}}{\|q{\bf x} - {\bf p}\|} \in \bb S^{d-1}.$$ Given $A \subset \s^{d-1}$, $T >0$, we form the counting functions
$$N(\xx, T) = \#\{({\bf p}, q) \in \bb Z^d \times \bb N, 0 < q \le T: \|q{\bf x} - {\bf p}\| <  C_d|q|^{-1/d}\}$$ and $$N(\xx, T, A) = \#\{({\bf p}, q) \in \bb Z^d \times \bb N, 0 < q \le T: \|q{\bf x} - {\bf p}\| < C_d|q|^{-1/d}, \theta(\pp, q) \in A \}.$$ Note that, while Dirichlet's theorem guarantees that $N(\xx, T) \rightarrow \infty$ as $T \rightarrow \infty$, $N(\xx, T, A)$ could, a priori, be $0$ for all $T >0$. Our first main theorem is

\begin{Theorem}\label{theorem:approx} For $A \subset \mathbb S^{d-1}$, a measurable subset, and for almost every $\xx \in \R^d$, $$\lim_{T \rightarrow \infty} \frac{N(\xx,T, A)}{N(\xx, T)} = \vol(A).$$ Here $\vol := \vol_{\bb S^{d-1}}$ is the Lebesgue probability measure on $\bb S^{d-1}$.

\end{Theorem}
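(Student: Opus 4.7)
\bigskip
\noindent\textbf{Proof plan.}

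\medskip
\noindent\emph{Dani correspondence and counting setup.} The plan is to recast the counting problem as a Birkhoff sum on the space $X=\SL_{d+1}(\R)/\SL_{d+1}(\Z)$. To each $\xx \in \R^d$, associate the unimodular lattice $\Lambda_\xx = u_\xx \Z^{d+1}$, where
$$u_\xx = \begin{pmatrix} I_d & \xx \\ 0 & 1 \end{pmatrix},$$
and use the diagonal one-parameter subgroup $g_t = \operatorname{diag}(e^{t/d},\ldots,e^{t/d},e^{-t})$. The key observation is that integer vectors $(\pp,q)\in \Z^d\times \N$ correspond to lattice vectors $(q\xx-\pp,q)\in \Lambda_\xx$, and the flow preserves the Dirichlet cone $\mathcal C=\{(v,w)\in \R^{d+1}:\|v\|^d w< C_d^d\}$, while translating the last coordinate: $g_t(v,w)=(e^{t/d}v,e^{-t}w)$.

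\medskip
\noindent\emph{Siegel transforms of window sets.} Fix the window $D=\{(v,w):\|v\|<C_d,\ 1/2< w\le 1\}\subset \mathcal C$, and, for $A\subset \s^{d-1}$, its angular restriction $D_A=\{(v,w)\in D:v/\|v\|\in A\}$. Since $\{g_{-n\log 2}D\}_{n\ge 0}$ tiles the portion of $\mathcal C$ with $0<w\le 1$, and direction is preserved by $g_t$, we obtain
$$N(\xx,T)=\sum_{n=0}^{\lfloor\log_2 T\rfloor}\widehat{\chi_D}(g_{n\log 2}\Lambda_\xx)+O(1),\qquad N(\xx,T,A)=\sum_{n=0}^{\lfloor\log_2 T\rfloor}\widehat{\chi_{D_A}}(g_{n\log 2}\Lambda_\xx)+O(1),$$
where $\widehat{\chi_E}(\Lambda)=\#(\Lambda\cap E)$ denotes the Siegel transform. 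By the Siegel integral formula, $\widehat{\chi_D}$ and $\widehat{\chi_{D_A}}$ lie in $L^1(X,\mu)$ (with $\mu$ the Haar probability measure), with integrals $\vol(D)$ and $\vol(D_A)$ respectively.

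\medskip
\noindent\emph{Applying Birkhoff.} Since $g_t$ acts ergodically on $(X,\mu)$, the Birkhoff pointwise ergodic theorem gives, for $\mu$-almost every $\Lambda \in X$,
$$\frac{1}{N}\sum_{n=0}^{N}\widehat{\chi_D}(g_{n\log 2}\Lambda)\longrightarrow \vol(D),\qquad \frac{1}{N}\sum_{n=0}^{N}\widehat{\chi_{D_A}}(g_{n\log 2}\Lambda)\longrightarrow \vol(D_A).$$
Because $\widehat{\chi_D}$ is unbounded on $X$ (it blows up in the cusp), I would first truncate at height $R$, apply Birkhoff to the bounded truncation, and then use an $L^1$-tail estimate (again via the Siegel formula) to control the error uniformly as $R\to\infty$. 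Taking the ratio of the two Birkhoff averages collapses the normalizing sums and yields
$$\frac{N(\xx,T,A)}{N(\xx,T)}\longrightarrow \frac{\vol(D_A)}{\vol(D)}=\vol(A),$$
where the final equality comes from writing the volumes in polar coordinates on $\R^d$.

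\medskip
\noindent\emph{The main obstacle: transfer to a.e.\ $\xx$.} Birkhoff produces a conclusion for $\mu$-almost every lattice, but $\{\Lambda_\xx:\xx\in \R^d\}$ is a single expanding-horospherical leaf of measure zero in $X$, so the a.e.\ statement in $X$ does not a priori say anything about our leaf. The crux of the proof is upgrading Birkhoff to a pointwise statement along this leaf for Lebesgue-a.e.\ $\xx$. My approach would be the standard thickening trick: apply Birkhoff to the averaged function $F_\epsilon(\Lambda)=\epsilon^{-d}\int_{B_\epsilon(0)}\widehat{\chi_D}(u_\vv\Lambda)\,\wrt{\vv}$, then use that $g_t u_\vv g_{-t}=u_{e^{(1+1/d)t}\vv}$ expands the ball $B_\epsilon$ under conjugation, so that evaluating at $\Lambda=\Lambda_\xx$ amounts to averaging $\widehat{\chi_D}$ over a large, well-distributed horospherical patch at time $t$. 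Combining this with the $\Z^d$-periodicity of $\xx\mapsto \Lambda_\xx$ and the ergodicity of the expanding toral endomorphism $\xx\mapsto e^{(1+1/d)t}\xx\bmod \Z^d$ shows that the exceptional $\xx$ form a Lebesgue-null set, completing the proof.
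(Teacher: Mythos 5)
Your overall architecture matches the paper's: recast Dirichlet approximates as lattice points of $\La_\xx = h_\xx\Z^{d+1}$ in a flow-invariant cone, chop the cone into translates of a window under a discrete-time diagonal flow, apply Siegel's mean value formula and Birkhoff's ergodic theorem to the Siegel transforms of the window, and compute the limiting ratio by polar coordinates. Two small points: the window you write, $D=\{\|v\|<C_d,\ 1/2<w\le 1\}$, is a cylinder, whose $g_{-n\log 2}$-translates do \emph{not} tile the Dirichlet cone (the error is proportional to the count, not $O(1)$); you want the cone slice $\{\|v\|^d w< C_d^{\,d},\ 1/2<w\le 1\}$ (the paper's $P_2$), whose translates tile exactly because the cone is $g_t$-invariant. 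Also the truncation of $\widehat{\chi_D}$ is unnecessary: Birkhoff applies directly to $L^1$ observables.

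The genuine gap is in the step you correctly single out as the crux, the transfer from Haar-a.e.\ lattice to Lebesgue-a.e.\ $\xx$ on the expanding horospherical leaf through $\Z^{d+1}$. The paper simply invokes~\cite[Section~2.5.1]{APT} for this. Your proposed mechanism does not work as stated. First, the ``expanding toral endomorphism $\xx\mapsto e^{(1+1/d)t}\xx\bmod\Z^d$'' is not well defined: for the time step forced by the tiling ($e^{ds}=2$, say), the dilation factor is $2^{1+1/d}$, which is an integer only when $d=1$, so the map does not descend to $\T^d$. Second, even if one adjusts $s$ so that $\lambda:=e^{(d+1)s}\in\Z$, the Birkhoff-generic set of $\xx$'s is not obviously invariant under $\xx\mapsto\lambda\xx$: one has $g_s\La_\xx = h_{\lambda\xx}\,g_s\Z^{d+1}$, which is \emph{not} the lattice $\La_{\lambda\xx}=h_{\lambda\xx}\Z^{d+1}$, so genericity of the orbit of $\La_\xx$ does not transfer to that of $\La_{\lambda\xx}$. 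More fundamentally, the forward Birkhoff-generic set is asymptotically invariant under the \emph{contracting} (stable) subgroup, not the expanding one, so a naive Fubini/invariance argument along $\{h_\xx\}$ goes the wrong way. The thickening idea you mention (\`a la Kleinbock--Margulis) is the right circle of ideas, but to close it one needs a quantitative ingredient --- e.g.\ effective equidistribution of large expanding horospherical pieces, or the unique ergodicity of the horospherical action combined with a maximal-function or Chebyshev-plus-summability argument --- none of which is supplied by ``ergodicity of the toral endomorphism.'' As written, the transfer step is asserted rather than proved, and this is exactly the nontrivial content the paper delegates to~\cite{APT}.
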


\begin{coro}\label{cor:approx}If $\vol(A) >0$, the inequality  $$\|q{\bf x} - {\bf p}\| < C_d|q|^{-1/d}, \theta(\pp, q) \in A$$ has infinitely many solutions $({\bf p}, q) \in \bb Z^d \times \bb N$ for almost every $\xx \in \R^d$. \end{coro}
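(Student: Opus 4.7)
The plan is to derive Corollary \ref{cor:approx} as an essentially immediate consequence of Theorem \ref{theorem:approx} together with the classical fact, stemming from Dirichlet's theorem, that $N(\xx, T) \to \infty$ as $T \to \infty$ for \emph{every} $\xx \in \R^d$. The idea is to show $N(\xx, T, A) \to \infty$ for almost every $\xx$, which of course is equivalent to there being infinitely many solutions.

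First, I would fix a measurable $A \subset \bb S^{d-1}$ with $\vol(A) > 0$ and let $E_A \subset \R^d$ be the full-measure set from Theorem \ref{theorem:approx} on which
\[
\lim_{T \to \infty} \frac{N(\xx, T, A)}{N(\xx, T)} = \vol(A).
\]
For any $\xx \in E_A$, since $\vol(A) > 0$, there exists $T_0 = T_0(\xx)$ such that for all $T \geq T_0$,
\[
\frac{N(\xx, T, A)}{N(\xx, T)} \geq \tfrac{1}{2}\vol(A).
\]
Next, I would invoke Dirichlet's theorem, which gives infinitely many $(\pp, q) \in \Z^d \times \NN$ satisfying \eqref{dirichlet}, and hence $N(\xx, T) \to \infty$ as $T \to \infty$ for every $\xx \in \R^d$ (not only almost every). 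This is standard: Dirichlet produces a new solution $(\pp, q)$ with $q$ arbitrarily large, so the counting function is unbounded.

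Combining these two facts gives, for $\xx \in E_A$ and $T \geq T_0(\xx)$,
\[
N(\xx, T, A) \geq \tfrac{1}{2}\vol(A)\, N(\xx, T) \longrightarrow \infty \qquad \text{as } T \to \infty.
\]
Thus for almost every $\xx$, the set of approximates $(\pp, q)$ satisfying $\|q\xx - \pp\| < C_d|q|^{-1/d}$ and $\theta(\pp, q) \in A$ is infinite, which is the claim.

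Honestly, I do not anticipate any real obstacle here: the corollary is pure bookkeeping given Theorem \ref{theorem:approx}. The only mildly non-trivial point to record is that the denominator $N(\xx, T)$ is \emph{automatically} unbounded in $T$ (so the ratio statement of Theorem \ref{theorem:approx} cannot be vacuously preserved by having the numerator stay finite while the denominator also stays bounded); this is precisely what Dirichlet's theorem guarantees for every $\xx$.
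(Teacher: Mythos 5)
Your proof is correct and takes exactly the route the paper intends: the corollary is an immediate consequence of Theorem~\ref{theorem:approx} combined with the observation (made explicitly in the paper right after the counting functions are introduced) that Dirichlet's theorem forces $N(\xx, T) \to \infty$ for every $\xx$, so a positive limiting ratio forces $N(\xx, T, A) \to \infty$ as well.
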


\noindent\textbf{Remark.} Logarithmic (in $T$) almost sure (in $\xx$) asymptotics for $N(\xx, T)$ follow from work of W.~Schmidt~\cite{Schmidt}; see also~\cite{APT} for a simple proof. We will show how  the latter argument yields similar asymptotics for $N(\xx, T, A)$ in \S\ref{sec:lattices}.
\subsection{Lattices} Theorem~\ref{theorem:approx} is closely related to a general theorem about approximation of directions by lattice vectors. Fix $d \geq 1, c > 0$ and define the set \begin{equation*}
R := \left\{{\bf v} = \begin{pmatrix}{\bf v}_1\\ v_2 \end{pmatrix} \in \bb R^{d} \times \bb R~:~\|{\bf v}_1\|^d|v_2| \leq c\right\},  
\end{equation*} 
which, for $v_2$ large enough, we may regard as a thinning region around the $v_2$-axis.  And, for $T>1$, identify pieces of $R$: \begin{equation}\label{cone}
P_T := \left\{{\bf v} = \begin{pmatrix}{\bf v}_1\\ v_2 \end{pmatrix} \in \bb R^{d} \times \bb R~:~\|{\bf v}_1\|^d v_2  \le c, 1 < v_2 \le T\right\} \subset \R^{d+1}. 
\end{equation} 
For a subset $A$ of $\bb S^{d-1}$, we define the subset $P_{A, T}$ of $P_T$ by 
$$P_{A, T} := \left\{ {\bf v} = \begin{pmatrix} \vv_1\\ v_2 \end{pmatrix} \in P_T: \frac{\vv_1}{\|\vv_1\|} \in A\right\}.$$ 
\begin{figure}\caption{The region $P_{50}$ in $\R^{2+1}$}
\includegraphics[width = 0.3\textwidth]{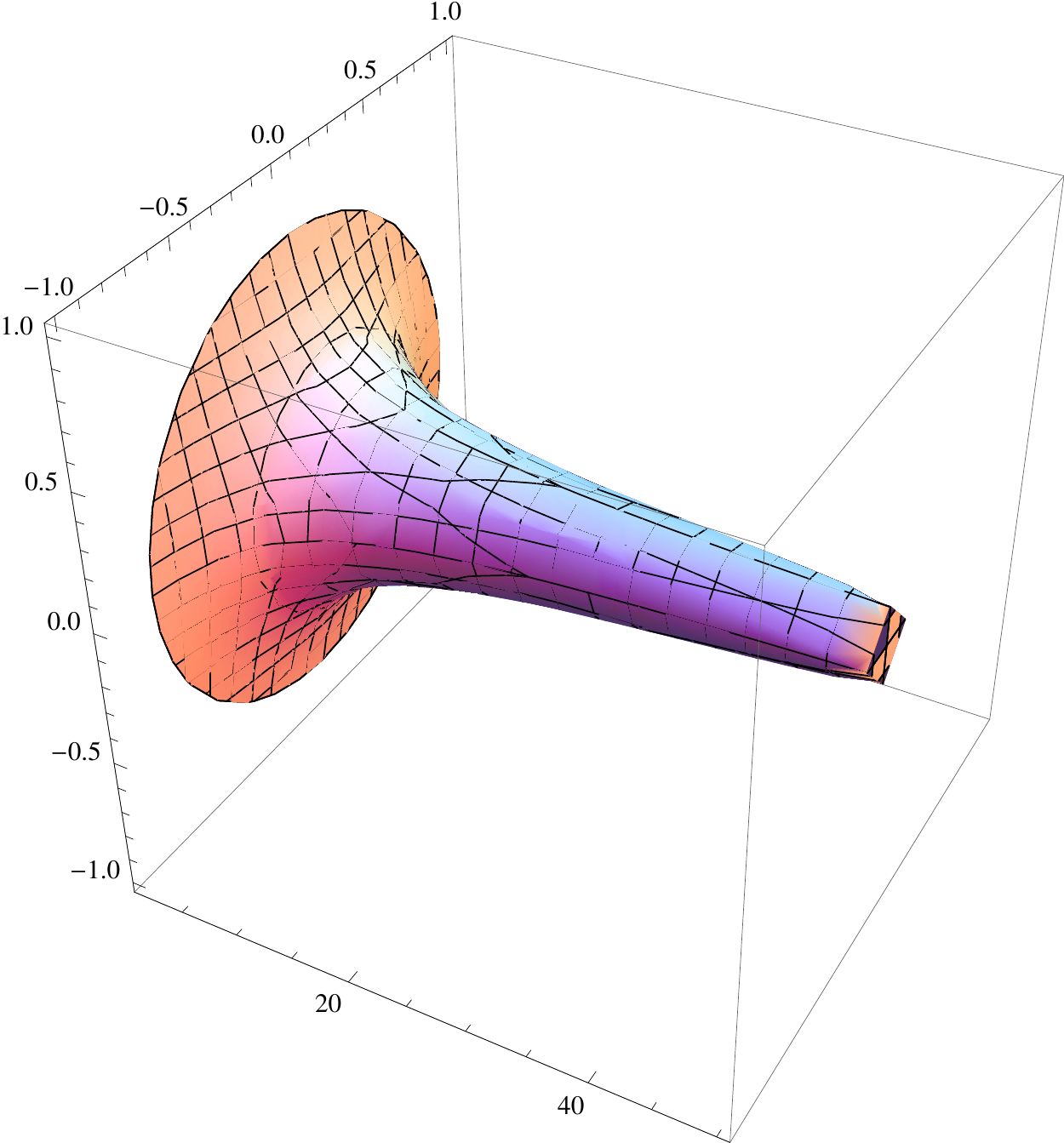}
\end{figure}
For $\La \subset \R^{d+1}$, a unimodular lattice, define
$$N(\La, T) := \# (\La \cap P_T) \text{ and } N(\La, T, A) := \# (\La \cap P_{A, T}).$$ 
Recall that $X_{d+1} := \SL_{d+1}(\R)/\SL_{d+1}(\Z)$ is the moduli space of unimodular lattices in $\R^{d+1}$ via the identification $g \SL_{d+1}(\Z) \mapsto g \Z^{d+1}$. With this identification, we endow $X_{d+1}$ with the probability measure $\mu = \mu_{d+1}$ induced by the Haar measure on $\SL_{d+1}(\R)$.
Our second main theorem is
\begin{Theorem}\label{mainbirkhoff} For $\mu$-almost every $\La \in X_{d+1}$, \begin{equation}\label{eq:latticespiral}\lim_{T \rightarrow \infty} \frac{N(\La, T, A)}{N(\La, T)} = \vol(A).\end{equation}
\end{Theorem}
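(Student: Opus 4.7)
\emph{Proof plan.} The idea is to renormalize $P_T$ by the $\SL_{d+1}(\R)$-diagonal flow
\begin{equation*}
a_t := \operatorname{diag}(e^{-t/d},\ldots,e^{-t/d},e^t),
\end{equation*}
which preserves the defining inequality $\|\vv_1\|^d v_2 \le c$ (since $\|e^{-t/d}\vv_1\|^d \cdot e^t v_2 = \|\vv_1\|^d v_2$), preserves the direction $\vv_1/\|\vv_1\|$, and scales $v_2$ by $e^t$. Thus $a_k$ carries $P_{(1,e]}$ bijectively onto $P_{(e^k,e^{k+1}]}$ and, because the direction is preserved, $P_{A,(1,e]}$ bijectively onto $P_{A,(e^k,e^{k+1}]}$. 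Writing $K := \lfloor \log T \rfloor$ and using the identity $\#(\La \cap a_k S) = \#(a_{-k}\La \cap S)$, dyadic decomposition of $P_T$ gives
\begin{equation*}
N(\La,T) = \sum_{k=0}^{K-1} F(a_{-k}\La) + E(\La,T), \qquad 0 \le E(\La,T) \le F(a_{-K}\La),
\end{equation*}
where $F(\La) := \#(\La\cap P_{(1,e]})$, with the identical identity for $N(\La,T,A)$ replacing $F$ by $F_A(\La) := \#(\La\cap P_{A,(1,e]})$ and $E$ by $E_A$.

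Since $v_2 \le e$ on the slice forces $\|\vv_1\| \le c^{1/d}$, the set $P_{(1,e]}$ is bounded, so Siegel's mean value theorem places $F, F_A$ in $L^1(X_{d+1},\mu)$ with $\int F\,d\mu = \vol(P_{(1,e]})$ and $\int F_A\,d\mu = \vol(P_{A,(1,e]})$. The transformation $\La \mapsto a_{-1}\La$ is $\mu$-preserving and ergodic (in fact mixing, by Howe--Moore applied to the unbounded diagonal subgroup). Applying Birkhoff's pointwise ergodic theorem, for $\mu$-a.e.\ $\La$,
\begin{equation*}
\frac{1}{K}\sum_{k=0}^{K-1} F(a_{-k}\La) \to \vol(P_{(1,e]}), \qquad \frac{1}{K}\sum_{k=0}^{K-1} F_A(a_{-k}\La) \to \vol(P_{A,(1,e]}).
\end{equation*}
Convergence of Cesaro means forces each individual summand to be $o(K)$, so $E(\La,T),\,E_A(\La,T) = o(\log T)$ almost everywhere, and dividing the two asymptotics yields $N(\La,T,A)/N(\La,T) \to \vol(P_{A,(1,e]})/\vol(P_{(1,e]})$ whenever $\vol(A)>0$. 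The case $\vol(A)=0$ is immediate because $\int F_A\,d\mu=0$ forces $F_A=0$ $\mu$-a.e., and countable intersection of conull sets then gives $F_A(a_{-k}\La)=0$ for all $k\ge 0$ on a conull set of $\La$.

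The proof concludes with a polar-coordinate computation identifying the ratio as $\vol(A)$: at each height $v_2 \in (1,e]$ the horizontal slice of $P_{(1,e]}$ is the Euclidean ball $\{\vv_1 \in \R^d : \|\vv_1\| \le (c/v_2)^{1/d}\}$, and its intersection with the cone over $A$ has relative Lebesgue measure $\vol(A)$ because $\vol$ is the probability measure on $\s^{d-1}$; integration in $v_2$ preserves this ratio. The only substantive issue in the whole argument is integrability of the Siegel transforms, and this is handled for free by the boundedness of $P_{(1,e]}$---no $L^p$-bounds of Schmidt are needed, which is precisely what makes the argument ``elementary'' as advertised in the abstract. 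The main conceptual obstacle I would anticipate is verifying that the dyadic-boundary contributions $E, E_A$ do not swamp the limit: this is handled by the observation above that Cesaro convergence in Birkhoff's theorem forces $F(a_{-K}\La)=o(K)$ almost surely, so these boundary terms are negligible on the logarithmic scale that governs $N(\La,T)$.
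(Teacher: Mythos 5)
Your proof is correct and follows essentially the same route as the paper: renormalize the region by the one-parameter diagonal flow, decompose $P_T$ into dyadic translates of a bounded annular piece, apply Siegel's mean value theorem to get the counting functions into $L^1$, and invoke the Birkhoff ergodic theorem for the ergodic diagonal action to extract logarithmic asymptotics for numerator and denominator separately. The only cosmetic differences are that the paper works in base $2$ with $s = (\log 2)/d$ rather than base $e$, and handles the interpolation from $T = 2^N$ to general $T$ by a monotonicity-sandwiching lemma rather than by your explicit $o(K)$ bound on the boundary term $E(\La,T) \le F(a_{-K}\La)$; both are standard and equivalent.
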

\noindent The relationship to Theorem~\ref{theorem:approx} is given by the following standard construction: given ${\bf x} \in \R^d$, we form the matrix $$h_{\xx} = \begin{pmatrix} \Id_{d} & {\bf x}\\0 & 1 \end{pmatrix}$$ and the associated unimodular lattice in $\R^{d+1}$
\begin{equation}\nonumber
\Lambda_{\bf x} := h_{\xx} \bb Z^{d+1} = \left\{\begin{pmatrix} q{\bf x} - {\bf p}\\ q \end{pmatrix} ~:~{\bf p} \in \bb Z^{d}, q \in \bb Z\right\}.
\end{equation}
\noindent Then we can view the approximates $({\bf p}, q)$ of ${\bf x}$ satisfying Dirichlet's theorem with $1 \le q \le T$ as points of the lattice $\Lambda_{\bf x}$ in the regions $P_T$, and those with $\theta(\pp, q) \in A$ as points of $\La_{\xx}$ in $P_{A, T}$ with $c = C_d$. That is, we have that $N(\xx, T) = N(\La_{\xx}, T)$, and $N(\xx, T, A) = N(\La_{\xx}, T, A)$,  and, so, Theorem~\ref{theorem:approx} can be reformulated as saying that (\ref{eq:latticespiral}) holds for $\La_{\xx}$ for almost every $\xx \in \R^d$.

\subsection{Average Spiraling}
We also have an $L^1$ (average) spiraling result on the space of lattices. Fix $0 \leq \epsilon < 1$ and $T > 0$, and define 

\begin{equation}\label{defR1}
R_{\epsilon, T} := \left\{ {\bf v} \in  R~:~ \epsilon T \le v_2 \le T \right\}  
\end{equation}

\noindent and, for a subset $A$ of $\bb S^{d-1}$ with zero measure boundary,

\begin{equation}\label{defR2}
R_{A, \epsilon, T} := \left\{ {\bf v} \in R_{\epsilon, T}~:~ \frac{{\bf v}_1}{\|{\bf v}_1\|} \in A \right\}.
\end{equation} 

\noindent For a unimodular lattice $\La$, define 

$$N(\Lambda, \epsilon, T) = \#\{\Lambda \cap R_{\epsilon, T}\}$$ and 

$$N(\Lambda, A, \epsilon, T) = \#\{\Lambda \cap R_{A, \epsilon, T}\}.$$

\noindent Let $dk$ denote the Haar measure on $K := K_{d+1}:=\SO_{d+1}(\bb R)$.  Our third main theorem is

\begin{Theorem}\label{main}
For every lattice $\Lambda \in X_{d+1}$, subset $A \subset \bb S^{d-1}$ with zero measure boundary, and $\epsilon > 0$, we have that 
\begin{equation}\label{main-1}
 \lim_{T \rightarrow \infty} \frac{\int_{K} N(k^{-1}\Lambda, A, \epsilon, T)~\wrt k}{\int_{K} N(k^{-1}\Lambda, \epsilon, T)~\wrt k}= \vol(A). 
\end{equation}
\end{Theorem}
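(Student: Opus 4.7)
The plan is to establish the ratio in (\ref{main-1}) as an \emph{exact} equality for every $T > 0$, so that the limit is immediate. Concretely, I will show that for every unimodular lattice $\Lambda$ and every $T > 0$,
\begin{equation}\label{proposed-identity}
\int_K N(k^{-1}\Lambda, A, \epsilon, T)\,dk \;=\; \vol(A)\cdot \int_K N(k^{-1}\Lambda, \epsilon, T)\,dk.
\end{equation}
The denominator is positive for all sufficiently large $T$: by Minkowski's theorem, $\Lambda$ has a shortest vector $v^*$ of bounded norm, so some integer multiple $nv^*$ has norm in $[\epsilon T, T]$, and such a vector can be rotated by a positive-measure set of $k \in K$ into the interior of $R_{\epsilon, T}$ (the ``cap'' near the $v_2$-axis).

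To prove (\ref{proposed-identity}), note that $R_{\epsilon,T}$ is bounded, so $\bigcup_{k \in K} k \cdot R_{\epsilon,T}$ sits in a fixed annulus of finite thickness; the sums $N(k^{-1}\Lambda,\epsilon,T)$ are therefore uniformly finite in $k$, and Fubini gives
$$\int_K N(k^{-1}\Lambda, A, \epsilon, T)\,dk \;=\; \sum_{v \in \Lambda\setminus\{0\}} \int_K \chi_{R_{A,\epsilon,T}}(k^{-1}v)\,dk,$$
and similarly for the denominator. For a fixed $v$ with $\|v\| = r$, since $K = \SO_{d+1}(\R)$ acts transitively on the sphere of radius $r$, and the pushforward of Haar measure under $k \mapsto k^{-1}v$ is the uniform probability measure $\sigma_{\bb S^d}$ on $\bb S^d$, each summand equals the spherical average $\int_{\bb S^d} \chi_{R_{A,\epsilon,T}}(ru)\,d\sigma_{\bb S^d}(u)$.

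The key step is the product-structure decoupling of the sphere measure. Writing $u = (\sin\phi\cdot\omega,\,\cos\phi)$ with $\phi \in [0,\pi]$ and $\omega \in \bb S^{d-1}$, the uniform measure factors as
$$d\sigma_{\bb S^d}(u) \;=\; c_d \sin^{d-1}\phi\, d\phi\, d\sigma_{\bb S^{d-1}}(\omega).$$
The inequalities defining $R_{\epsilon,T}$ depend only on $\|v_1\| = r\sin\phi$ and $v_2 = r\cos\phi$, hence only on $(r,\phi)$; the additional condition $v_1/\|v_1\| \in A$ cutting out $R_{A,\epsilon,T}$ inside $R_{\epsilon,T}$ depends only on $\omega$. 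Pulling the $\omega$-integration out thus produces the exact factor $\vol(A)$, and summing over $v\in \Lambda \setminus\{0\}$ yields (\ref{proposed-identity}). There is no real main obstacle here: the theorem holds as an exact identity rather than merely as an asymptotic statement, and the measure-zero locus $\sin\phi = 0$ where $\omega$ is undefined is harmless. The zero-measure-boundary hypothesis on $A$ plays no role in this direct argument; it would become relevant under the alternative Marklof--Str\"ombergsson-style route advertised in the abstract, which would simultaneously provide the limiting value of the denominator itself rather than only the ratio.
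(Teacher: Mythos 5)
Your proof is correct, and it takes a genuinely different and considerably more elementary route than the paper. The paper derives Theorem~\ref{main} as a corollary of Theorem~\ref{theorem:siegel:equidist} (equidistribution of spherical averages of Siegel transforms), whose proof occupies all of Section~\ref{secSiegelEquidisProof} with the rod geometry, cap estimates, and Vitali covering arguments. You instead observe that the two sides of~(\ref{main-1}) are \emph{exactly} proportional for every $T$: after Tonelli, each lattice vector $v$ contributes $\sigma_{\bb S^d}\{u: \|v\|u \in R_{A,\epsilon,T}\}$ to the numerator, and in the coordinates $u = (\sin\phi\,\omega,\cos\phi)$ the defining conditions of $R_{\epsilon,T}$ involve only $(\|v\|,\phi)$ while the extra condition $v_1/\|v_1\|\in A$ involves only $\omega$; with $d\sigma_{\bb S^d} \propto \sin^{d-1}\phi\,d\phi\,d\sigma_{\bb S^{d-1}}(\omega)$, the $\omega$-integral factors out as $\vol(A)$ term by term, up to the null set $\sin\phi=0$. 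Your observation that the zero-measure-boundary hypothesis on $A$ is superfluous under this route is also correct: the identity holds for any measurable $A$, whereas the paper needs $\partial A$ null to make $\chi_{R_{A,\epsilon}}$ Riemann-integrable so that Theorem~\ref{theorem:siegel:equidist} applies. What the paper's heavier route buys is strictly more information: it shows the numerator and denominator \emph{individually} converge (to $\vol(R_{A,\epsilon})$ and $\vol(R_\epsilon)$ respectively), not merely their ratio, and Theorem~\ref{theorem:siegel:equidist} is developed as a stand-alone tool with applications beyond this theorem (and in the sequels \cite{AGT2,AGT3}). For the statement as literally posed, however, your argument is shorter, sharper, and assumption-free in $A$.
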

\medskip

\noindent Theorem \ref{main} is derived from our result on \emph{spherical averages of Siegel transforms}, Theorem \ref{theorem:siegel:equidist}, which we believe to be of independent interest.


\subsection{Biased Spiraling}

On the other hand, we construct explicit examples of lattices $\La$ and directions $\vv$ for which (non-averaged) equidistribution does not hold.  Our fourth main theorem is

\begin{Theorem}\label{theomaineg}
Let $d \geq 1$.  There exists a lattice $\Lambda \in \SL_{d+1}(\bb R) / \SL_{d+1}(\bb Z)$, a set $A \subset \bb S^{d-1}$ with zero measure boundary, and a sequence $\{T_n\}$ for which \begin{eqnarray*} \label{eqnmaineg}
\lim_{n \rightarrow \infty}  \frac {N(\Lambda, A, \epsilon, T_n)}{N(\Lambda, \epsilon, T_n)} \neq \vol(A)
\end{eqnarray*}
for every $1 > \epsilon\geq0$.
\end{Theorem}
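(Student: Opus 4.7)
The plan is to give explicit examples of lattices along which the equidistribution of Theorem~\ref{theomaineg} fails. For the clean case $d \ge 2$, I would fix an irrational $\alpha \in \R$, set $\xx := (\alpha, 0, \ldots, 0) \in \R^d$, and consider the Dirichlet lattice $\La := \La_\xx$. Every $\mathbf v \in \La$ has the form $(q\alpha - p_1, -p_2, \ldots, -p_d, q)$; if such $\mathbf v$ lies in the thinning region $R$ then $|p_j|^d q \le c$ for each $j \ge 2$, which forces $p_2 = \cdots = p_d = 0$ once $q > c$. Hence every sufficiently deep point of $\La \cap R$ has direction $\mathbf v_1 / \|\mathbf v_1\| = \pm e_1 \in \s^{d-1}$.

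I then choose any measurable $A \subset \s^{d-1}$ with $\vol(A) > 0$ whose closure avoids $\{\pm e_1\}$---a small spherical cap around $e_2$ suffices, and this is precisely where the hypothesis $d \ge 2$ enters. With this choice, $N(\La, A, \epsilon, T)$ is bounded by the finite count of lattice points in $R$ with $q \le c$, whereas $N(\La, \epsilon, T) \to \infty$: the one-dimensional condition $|q\alpha - p_1| \le (c/q)^{1/d}$ is \emph{weaker} than classical Dirichlet once $d \ge 2$, and a routine Lebesgue-style estimate (using equidistribution of $\{q\alpha\}$ modulo $1$) shows that the number of solutions with $q \in [\epsilon T, T]$ has order $T^{1-1/d}$. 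Thus for any sequence $T_n \to \infty$ and every $\epsilon \in [0, 1)$,
\[ \lim_{n \to \infty} \frac{N(\La, A, \epsilon, T_n)}{N(\La, \epsilon, T_n)} \;=\; 0 \;\ne\; \vol(A). \]

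The main obstacle is the case $d = 1$, where $\s^0 = \{\pm 1\}$ leaves no room to avoid $\pm e_1$ and the central symmetry $\La = -\La$ rules out the reducible construction above. A natural candidate is $\La = \La_{\sqrt 2}$ with $c$ chosen so that by Legendre's theorem only the continued-fraction convergents $p_n / q_n$ of $\sqrt 2$ lie in the thinning region; the quantities $q_n \sqrt 2 - p_n$ then alternate in sign with $n$ and the denominators grow geometrically as $q_n \sim C(1 + \sqrt 2)^n$. The subtle point is that a naive choice such as $T_n = q_{2n}$ produces windows $[\epsilon T_n, T_n]$ whose parity structure (and hence the limiting ratio) depends on $\epsilon$, so a more refined joint choice of $\La$ and $T_n$ is needed to obtain a non-$1/2$ limit \emph{uniformly for every} $\epsilon \in [0, 1)$ rather than merely along a generic $\epsilon$.
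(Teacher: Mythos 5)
Your higher-dimensional argument ($d \ge 2$) is sound and in fact runs parallel to the paper's: the paper also forces all but finitely many lattice points of $R$ to spiral into a lower-dimensional subsphere, and does so by taking $\xx$ to generate a non-minimal toral translation (i.e.\ satisfy a rational relation $\sum k_i x_i \in \Z$). Your choice $\xx = (\alpha, 0, \ldots, 0)$ is a particularly explicit instance of this, with $A$ a cap avoiding $\pm e_1$; the argument that $|p_j| < 1$ for $j \ge 2$ once $q > c$ is the same mechanism the paper exploits through a change of basis. (A small point worth tidying: for $\epsilon = 0$ you should first dispose of the finitely many $q$-values at most $c$, and one should really restrict to $v_2 > 1$ as in the definition of $P_T$ to avoid the degenerate lattice points on the hyperplane $v_2 = 0$.)

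The genuine gap is in the case $d = 1$, and you have correctly sensed it but not resolved it. With $x = \sqrt{2}$ (or any badly approximable number) the partial quotients are bounded, so the lattice points of $\La_x \cap R$ are essentially the convergents, which alternate sides of the ray $(x,1)^T$ with comparable geometric spacing; the resulting ratio converges to $1/2 = \vol(A)$ for \emph{any} sequence $T_n$, so no refined choice of $\{T_n\}$ can rescue $\sqrt 2$. The missing idea is to make the partial quotients \emph{unbalanced}: the paper takes $a_n = 4$ for $n$ odd and $a_n = n^n$ for $n$ even. Whenever $a_{n+1}$ is large, there are roughly $a_{n+1}$ lattice points of the form $m q_n$ (multiples of the $n$-th convergent denominator) inside $R$, all rotating to the \emph{same} side of zero (sign of $q_n \cdot x$), and for $n$ odd these all project to $-1$. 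Because $a_{n+1} = (n+1)^{n+1}$ grows super-exponentially, the cumulative count of $A = \{-1\}$ approximates swamps everything else, and the ratio $N(\La, A, \epsilon, T)/N(\La, \epsilon, T)$ converges to $1$ for every $\epsilon \in [0,1)$ and along every sequence $T_n \to \infty$ --- so the uniformity in $\epsilon$ that you flag as the subtle point falls out of the super-exponential growth rather than from a clever choice of $T_n$. To complete your proof you would need to replace $\sqrt 2$ by such an unbalanced continued fraction and carry out the counting of the ``in-between'' points $m q_n + r$ between consecutive convergent denominators, showing that for the large-quotient indices only the remainder-$0$ multiples contribute unboundedly to $\La \cap R$.
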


\noindent For $d =1$, note that $\bb S^0:=\{-1, 1\}$ and we define $\vol(\{-1\})=\vol(\{1\}) = 1/2$.  

\medskip

\subsection*{Organization of the paper}   In \S\ref{sec:lattices}, we state Theorem \ref{theorem:siegel:equidist}, our result on spherical averages of Siegel transforms and use it to prove Theorem \ref{main}. We also prove Theorem \ref{mainbirkhoff}. In \S \ref{secSiegelEquidisProof}, we prove Theorem \ref{theorem:siegel:equidist}. The construction of examples of nonuniform spiraling for certain lattice approximates (and the proof of Theorem~\ref{theomaineg}) is carried out in \S\ref{sec:nonuniform}. Despite the extensive literature on Dirichlet's theorem and its variants, as far as we are aware, our work is the first to study the problem of spiralling for lattice approximates. In a sequel \cite{AGT2}, we prove multiparameter versions of the main theorems considered in the present work and related problems in Diophantine approximation and in \cite{AGT3} we establish versions of the main theorems in the wider generality of number fields.
\medskip

\noindent\textbf{Acknowledgements:}
This work was initiated during a visit by A.~Ghosh to the University of Illinois at Urbana-Champaign. He thanks the department for its hospitality. J.~S.~Athreya would like to thank Yale University for its hospitality in the 2012-13 academic year, when this work was completed. He would also like to thank G.~Margulis for useful discussions.  The authors would like to thank Jens Marklof for pointing us to \cite{MS3} and J.~Tseng would, in addition, like to thank Jens for useful discussions and comments. We also thank D.~Kleinbock for helpful discussions and the referee for a helpful report. 

\section{Equidistribution on the space of lattices}\label{sec:lattices}

In this section, we show how to reduce the proofs of Theorems~\ref{main} and~\ref{mainbirkhoff} to equidistribution problems on the space of lattices. Theorem~\ref{mainbirkhoff} is a consequence of this reduction and the Birkhoff ergodic theorem, which gives us almost everywhere equidistribution of trajectories for diagonal flows. For Theorem~\ref{main}, our ergodic tool Theorem~\ref{theorem:siegel:equidist} will be of independent interest, as it gives an equidistribution theorem for spherical averages of Siegel transforms for quite general functions.

Recall the definition of the Siegel transform: given a lattice $\Lambda$ in ${\bb R}^{d+1}$ and a bounded Riemann-integrable function $f$ with compact support on $\bb R^{d+1}$, denote by $\widehat{f}$ its \emph{Siegel transform}\footnote{One could define the Siegel transform only over primitive lattice points, in which case results analogous to Theorems~\ref{theorem:siegel:equidist} and~\ref{theorem:siegel:equidistUpper} also hold (using, essentially, the same proof).}:

$$ \widehat{f}(\Lambda) := \sum_{\bf v \in \Lambda \backslash \{\boldsymbol{0}\}} f(\bf v).$$

\noindent Let $\mu = \mu_{d+1}$ be the probability measure on $X_{d+1} :=  \SL_{d+1}(\bb R)/\SL_{d+1}(\bb Z)$ induced by the Haar measure on $\SL_{d+1}(\R)$ and $\wrt {\bf v}$ denote the usual volume measure on ${\bb R}^{d+1}$.  (We also let $\vol := \vol_{\bb R^{d+1}}$ denote this volume measure and will make use of the subscript should the need to distinguish it from $\vol_{\bb S^{d-1}}$ arise.) We recall the classical Siegel Mean Value Theorem \cite{Siegel}:

\begin{Theorem}
Let $f$ be as above.\footnote{This condition can be generalized to $f \in L^1(\bb R^{d+1})$.} Then $\widehat{f} \in L^{1}(X_{d+1}, \mu)$ and
$$ \int_{{\bb R}^{d+1}} f ~\wrt {\bf v} = \int_{X_{d+1}} \widehat{f} ~\wrt \mu.$$
\end{Theorem}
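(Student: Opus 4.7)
The plan is to verify that $f \mapsto I(f) := \int_{X_{d+1}} \widehat f \, d\mu$ defines a positive, $\SL_{d+1}(\R)$-invariant linear functional on $C_c(\R^{d+1}\setminus\{0\})$, and then to identify it with Lebesgue integration by uniqueness of invariant measures. Invariance is immediate from the identity $\widehat{f\circ g}(\Lambda) = \widehat f(g\Lambda)$ combined with the $\SL_{d+1}(\R)$-invariance of $\mu$. Since $\SL_{d+1}(\R)$ acts transitively on $\R^{d+1}\setminus\{0\}$ with stabilizer conjugate to $\SL_d(\R)\ltimes\R^d$, there is a unique (up to scalar) $\SL_{d+1}(\R)$-invariant Radon measure there, namely Lebesgue; this forces $I(f) = c \int_{\R^{d+1}} f \,\wrt{\bf v}$ for a single constant $c\ge 0$, which I would then pin down on a convenient test function.

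Before invoking uniqueness one must check that $\widehat f \in L^1(X_{d+1},\mu)$. Dominating $|f|$ by a nonnegative continuous $g$ with $\supp g \subset B_R$, it suffices to bound $\int_{X_{d+1}} \widehat g\, d\mu$. I would use the Minkowski-type bound $\#(\Lambda \cap B_R) \lesssim \prod_{i=1}^{d+1} \max(R/\lambda_i(\Lambda),1)$ in the successive minima of $\Lambda$, together with the standard integrability of $\prod_i \max(\lambda_i(\Lambda)^{-1},1)^\alpha$ against $\mu$ for small $\alpha>0$; the latter follows from the exponential decay of Haar measure in the Iwasawa coordinates on a Siegel fundamental set, so the polynomial blow-up of $\widehat g$ near short lattices is absorbed.

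To fix the constant $c$, I would unfold $I(f)$ against a fundamental domain $F$ for $\SL_{d+1}(\Z)$, split the inner sum $\sum_{v\in\Z^{d+1}\setminus\{0\}} f(gv)$ by content as $\sum_{n\ge 1}\sum_{w\in\Z^{d+1}_{\mathrm{prim}}} f(ngw)$, and then use transitivity of $\SL_{d+1}(\Z)$ on primitive vectors to collapse the second sum onto $\SL_{d+1}(\R)/H$, where $H$ is the stabilizer of $e_1$. Reassembling via the quotient integration formula and the identification $\SL_{d+1}(\R)/H \simeq \R^{d+1}\setminus\{0\}$ produces a factor $\zeta(d+1)\cdot\vol(H/(H\cap\SL_{d+1}(\Z)))$ times $\int f \,\wrt{\bf v}$, which equals $\int f \,\wrt{\bf v}$ once Haar measures on $\SL_{d+1}(\R)$ and $H$ are chosen compatibly with $\mu_0(F)=1$. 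The main obstacle is the integrability step: invariance and uniqueness are essentially formal, and the normalization is routine bookkeeping, whereas bounding $\widehat f$ in $L^1(\mu)$ genuinely requires quantitative reduction theory on $\SL_{d+1}(\R)/\SL_{d+1}(\Z)$.
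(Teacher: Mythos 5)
The paper does not prove this statement: it is the classical Siegel mean value theorem, and the paper recalls it and simply cites~\cite{Siegel}. Your sketch is a standard and essentially correct route to it, but two points are worth flagging.

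First, there is redundancy in your plan. Once you commit to the unfolding in the third paragraph, the first two are unnecessary: apply the unfolding to a nonnegative $f$ and use Tonelli rather than Fubini, so the interchange of $\int_{X_{d+1}}$ with $\sum_{\boldsymbol v\in\Lambda\setminus\{\boldsymbol 0\}}$ is automatic with values in $[0,\infty]$; the content decomposition, the transitivity of $\SL_{d+1}(\Z)$ on $\Z^{d+1}_{\mathrm{prim}}$, and the quotient integration formula then yield the identity $\int_{X_{d+1}}\widehat f\,\wrt\mu = \int_{\R^{d+1}} f\,\wrt{\bf v}$ together with its finiteness in one stroke. For a general bounded, compactly supported $f$ one dominates $|f|$ by a nonnegative function of the same type and splits into positive and negative parts. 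This sidesteps entirely the ``main obstacle'' you identify — no a priori integrability via reduction theory is needed — and it renders the uniqueness-of-invariant-measure argument decorative. Uniqueness is a perfectly good alternative strategy, but then you only need to compute $I(f)$ for a single convenient test function rather than perform the full unfolding.

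Second, the integrability estimate as you write it does not close. Your Minkowski-type bound $\#(\Lambda\cap B_R)\lesssim_R\prod_i\max(\lambda_i(\Lambda)^{-1},1)$ carries exponent $1$, so you need $\prod_i\max(\lambda_i(\Lambda)^{-1},1)\in L^1(X_{d+1},\mu)$, not integrability of its $\alpha$-th power for small $\alpha>0$; the small-$\alpha$ statement is strictly weaker and does not imply the exponent-$1$ one. The exponent-$1$ statement is in fact true, but establishing it honestly requires the Schmidt-type volume estimates on a Siegel fundamental set, and the heuristic ``exponential decay absorbs polynomial blow-up'' is exactly the thing one must quantify — the exponent on the polynomial matters, and indeed the inequality just barely holds at exponent $1$. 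As written that step has a gap, though (as above) it can be bypassed entirely via Tonelli, which is the cleanest way to get the $L^1$ statement the theorem asserts.
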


\medskip 

\noindent Note that if $f$ is the indicator function of a set $A \backslash \{\boldsymbol{0}\}$, then $\hat{f}(\Lambda)$ is simply the number of points in $\Lambda \cap (A \backslash \{\boldsymbol{0}\})$. 

Let $$g_t := \begin{pmatrix}e^{t}\Id_{d} & 0\\ 0 & e^{-dt} \end{pmatrix} \in \SL_{d+1}(\bb R)$$ and $e_1, \cdots, e_{d+1}$ be the standard basis of $\bb R^{d+1}$.

Note that if we set $t$ so that $e^{dt} = T$, we have
$$g_t R_{\epsilon, T} = R_{\epsilon, 1} =: R_{\epsilon}$$
\noindent and
$$ g_t R_{A, \epsilon, T} = R_{A, \epsilon, 1} =: R_{A, \epsilon}.$$

\noindent We write $\mathbf{1}_{A, \epsilon, T}$ for the characteristic function of $R_{A, \epsilon, T}$ and $\mathbf 1_{\epsilon, T}$ for the characteristic function of $R_{\epsilon, T}$, and drop the subscript when $T=1$.  

In view of the above discussions,  we have

\begin{equation}\label{eq:gt} N(\La, A, \epsilon, e^t) = \widehat{{\bf 1}}_{R_{A, \epsilon}}(g_{t/d}  \Lambda) \end{equation}

\begin{equation}\label{eq:NK} N(k^{-1}\La, A, \epsilon, T)  =  \widehat{{\bf 1}}_{R_{A, \epsilon}}(g_t k \Lambda). \end{equation} 

\noindent Integrating these formulas with respect to $t$ and $k$ respectively, we obtain

\begin{equation}\label{eq:tavg} \int_{0}^S N(\La, A, \epsilon, e^t) dt = \int_0^S \widehat{{\bf 1}}_{R_{A, \epsilon}}(g_{t/d}  \Lambda) dt\end{equation}

\begin{equation}\label{eq:Kavg} \int_K N(k^{-1}\La, A, \epsilon, T)  dk =  \int_K \widehat{{\bf 1}}_{R_{A, \epsilon}}(g_t k \Lambda) dk.\end{equation}

\subsection{Proof of Theorem~\ref{mainbirkhoff} and Theorem~\ref{theorem:approx}.}\label{sec:time} 
Let $s:= \frac{\log 2} d$. (Note that $s$ is a fixed constant).  Moore's ergodicity theorem (see, for example,~\cite{BekkaMayer}) states that the action of $g_t$ on $X_{d+1}$ is ergodic, so, by the Birkhoff ergodic theorem, for any $h \in L^1(X_{d+1}, \mu)$, we have, for almost every $\La \in X_{d+1}$,  $$\lim_{N \rightarrow \infty}\frac 1 N \sum_{n=0}^{N-1} h(g_{s}^n \La)  = \int_{X_{d+1}} h d\mu.$$ 
By Siegel's mean value formula, the functions $\widehat{{\bf 1}}_{P_{A, 2}}$ and $\widehat{{\bf 1}}_{P_{2}}$ are in $L^1(\mu)$. Write $Q_{i} = P_{2^i} \backslash P_{2^{i-1}}$, $Q_{A, i} = P_{A, 2^i} \backslash P_{A, 2^{i-1}}$. 
Then, since 
$$g_{- s} Q_i = Q_{i+1} \text{ and }   g_{- s} Q_{A, i} = Q_{A, i+1},$$we have 
 $$\sum_{i=0}^{N-1} \widehat{{\bf 1}}_{P_{2}}(g_{s}^i \La) = \sum_{i=0}^{N-1} \# (\La \cap Q_{i+1}) = \#(\La \cap P_{2^N}) = N(\La, 2^N)$$  
 $$\sum_{i=0}^{N-1} \widehat{{\bf 1}}_{P_{A, 2}}(g_{s}^i \La) = \sum_{i=0}^{N-1} \# (\La \cap Q_{A, i+1}) = \#(\La \cap P_{A, 2^N}) = N(\La, 2^N, A).$$ 
 By applying the Birkhoff ergodic theorem and the Siegel mean value theorem to these functions, we obtain, for almost every $\La$, $$\lim_{N \rightarrow \infty} \frac{1}{N} N(\La, 2^N) = \vol(P_2)  \text{ and } \lim_{N \rightarrow \infty} \frac{1}{N} N(\La, 2^N, A) = \vol(P_{A, 2}).$$ Note that if $F: [0, \infty) \rightarrow [0, \infty)$ is an increasing function, and $ \frac{F(2^k)}{k} \rightarrow \log 2$, then $$\lim_{T \rightarrow \infty} \frac{F(T)}{\log T} = 1.$$ Thus, we obtain (for almost every $\La$) \[\lim_{T \rightarrow \infty} \frac{1}{\log T} N(\La, T, A) \bigg{/} \frac{1}{\log T} N(\La, T) = \frac{\vol(P_{A, 2})}{\vol(P_2)}.\] Since $\vol(P_{A, 2})/\vol(P_2) = \vol(A)$, we obtain Theorem~\ref{mainbirkhoff}.\qed\medskip

This argument was used in~\cite{APT} to obtain logarithmic (in $T$) asymptotics for $N(\La, T)$ (and other related functions), and, as shown, also yields logarithmic asymptotics for $N(\La, T, A)$. To prove Theorem~\ref{theorem:approx}, consider the collection of matrices $\{h_{\xx}: \xx \in \R^{d}\}$ forms the \emph{horospherical} subgroup for $\{g_t\}$, and as such, the set of lattices $\{\La_{\xx}: \xx \in \R^d\}$ is the \emph{unstable} manifold for the action of $\{g_{t}\}_{t \geq 0}$ on $X_{d+1}$. In particular, for almost every $\xx \in \R^d$, $\La_{\xx}$ is Birkhoff generic for the action of $g_{s}$, which following the above argument, yields Theorem~\ref{theorem:approx} (see~\cite[Section~2.5.1]{APT}).\qed\medskip

\subsection{Statement of results for Siegel transforms}\label{sec:siegel:equidist} By (\ref{eq:Kavg}), to prove Theorem~\ref{main}, we need to show the equidistribution of the Siegel transforms of the sets $R_{A, \epsilon}$ and $R_{\epsilon}$ with respect to the integrals over $g_t$-translates of $K$.  The main ergodic tool in this setting is our fifth main theorem, a result on the spherical averages of Siegel transforms:

\begin{Theorem}\label{theorem:siegel:equidist}
Let $f$ be a bounded Riemann-integrable function of compact support on $\R^{d+1}$.  Then for any $\La \in X_{d+1}$, $$ \lim_{t \to \infty} \int_{K_{d+1}} \widehat{f}(g_t k \La) ~\wrt k = \int_{X_{d+1}}\widehat{f}~\wrt \mu.$$

\end{Theorem}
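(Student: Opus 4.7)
The plan is to interchange the sum defining $\widehat f$ with the integral over $K$, exploit the radial symmetry produced by averaging over $K$, and reduce Theorem~\ref{theorem:siegel:equidist} to a Riemann-sum-type convergence for the unimodular lattice $\La$.

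First, by Fubini,
\[
\int_K \widehat f(g_t k\La)\,\wrt k \;=\; \sum_{v\in\La\setminus\{\mathbf 0\}} F_t(v), \qquad F_t(v) := \int_K f(g_t k v)\,\wrt k.
\]
Since $K = \SO_{d+1}(\R)$ acts transitively on each sphere $\{\|v\|=r\}$, the function $F_t$ is radial; write $F_t(v)=\phi_t(\|v\|)$. A second application of Fubini, together with $\det g_t = 1$, gives $\int_{\R^{d+1}} F_t\,\wrt v = \int_{\R^{d+1}} f\,\wrt v$, which by the Siegel mean value theorem equals the target $\int_{X_{d+1}} \widehat f\,\wrt\mu$. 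The theorem therefore reduces to showing that, for every unimodular $\La$,
\[
\sum_{v\in\La\setminus\{\mathbf 0\}} \phi_t(\|v\|) \;\longrightarrow\; \int_{\R^{d+1}} \phi_t(\|v\|)\,\wrt v \qquad (t\to\infty).
\]

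Next I would make $\phi_t$ explicit. Parametrizing $\omega = (\omega',\omega_{d+1}) \in \bb S^d$ by its $\R^d$-component and changing variables $u = e^{t} r\omega'$ converts the spherical integral into an integral of $f$ over a disk in $\R^d$ with Jacobian $(e^t r)^{-d}(1-|\omega'|^2)^{-1/2}$. In the regime $e^t r \to \infty$, the square-root factor tends to $1$ on $\supp f$ and one obtains
\[
\phi_t(r) \;=\; \frac{g(e^{-dt}r)+g(-e^{-dt}r)}{|\bb S^d|\, e^{dt}r^d}\bigl(1+o(1)\bigr), \qquad g(z):=\int_{\R^d} f(u,z)\,\wrt u,
\]
with the two terms coming from the upper and lower hemispheres. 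Rescaling $w = e^{-dt}v$, the lattice $e^{-dt}\La$ has covolume $e^{-dt(d+1)}$, and the sum $\sum \phi_t(\|v\|)$ becomes (up to the $o(1)$) the Riemann sum $\mathrm{covol}(e^{-dt}\La)\sum_{w\in e^{-dt}\La\setminus\{\mathbf 0\}} H(w)$ for the $L^1$-function $H(w) := (g(\|w\|)+g(-\|w\|))/(|\bb S^d|\,\|w\|^d)$ on $\R^{d+1}$, whose integral is $\int f$.

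The main obstacle is making both the asymptotic for $\phi_t$ and the Riemann-sum convergence rigorous and \emph{uniform} in $\La$: the function $H$ has an integrable but unbounded singularity $\|w\|^{-d}$ at the origin, while $e^{-dt}\La$ accumulates near the origin at the rate $e^{-dt}\lambda_1(\La)$. Following the Marklof--Str\"ombergsson strategy \cite{MS3}, I would split the lattice sum into three regimes of $\|v\|$. For bounded $v$, only finitely many lattice points contribute, and each $F_t(v)$ is $O(e^{-dt})$ because the measure of $k\in K$ with $g_t kv \in \supp f$ is $O(e^{-dt})$, so this piece tends to $0$. For intermediate $v$ (say $\|v\|$ between a fixed constant and $e^{\delta t}$), the contribution is controlled by a direct $L^1$-bound on the Siegel transform of the characteristic function of the relevant annular region. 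In the main regime $\|v\|\gtrsim e^{\delta t}$, the asymptotic formula for $\phi_t$ is accurate to order $o(1)$ and Riemann-sum convergence applies to $H$ restricted to $\R^{d+1}\setminus B_\delta(\mathbf 0)$; a Minkowski-type bound on the small-$\|w\|$ contribution in terms of $\lambda_1(\La)$ absorbs the singularity at the origin, yielding the claimed convergence for every $\La\in X_{d+1}$.
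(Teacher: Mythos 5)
Your strategy is recognisably the same Marklof--Str\"ombergsson idea that the paper uses --- average over $K$ to produce a radial function of $\|\boldsymbol{v}\|$, then reduce the sum over $\Lambda \setminus \{\boldsymbol{0}\}$ to a lattice-point count / Riemann-sum against a scaling lattice --- but the route through it is genuinely different, and the trade-offs are worth spelling out. The paper first reduces to the \emph{upper} bound and outsources the lower bound to Duke--Rudnick--Sarnak (or Kleinbock--Margulis), and only then approximates $f$ from above by step functions on rods $E$; by restricting attention to characteristic functions, the radial profile $A_R^E(\|\boldsymbol{v}\|)$ is literally the proportion of a sphere lying in a rod, so the geometry of ``caps'' can be controlled explicitly and monotonicity allows a Riemann--Stieltjes integral. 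You instead keep $f$ general and try to establish both inequalities at once via the exact asymptotic $\phi_t(r) = (g(e^{-dt}r)+g(-e^{-dt}r))/(|\bb S^d| e^{dt} r^d)\bigl(1+o(1)\bigr)$, which if it went through would give a self-contained proof of equality without DRS. Your formula for $H$ and the identity $\int H = \int f$ check out, the covolume rescaling is correct, and the partial-summation bound near the origin (using $\lambda_1(\Lambda)>0$) is the same device the paper uses in its ``third case.''

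There is, however, a real gap in the asymptotic for $\phi_t$. After the change of variables $u=e^t r\omega'$, the second argument of $f$ is not $\pm e^{-dt}r$ but $\pm e^{-dt}r\sqrt{1-|u|^2/(e^{2t}r^2)}$, and the Jacobian contributes a similar $\bigl(1+O(e^{-2t}/r^2)\bigr)$ factor. For $f$ merely Riemann-integrable --- and in the application $f$ is a characteristic function --- this perturbation of the argument does \emph{not} yield a pointwise $(1+o(1))$ multiplying $g(e^{-dt}r)$: a tiny shift across a discontinuity hypersurface changes $f$ by $O(1)$. Your displayed asymptotic is valid only for continuous $f$ (where uniform continuity controls the shift). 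To make the argument work in the generality of the theorem you would have to insert the sandwiching step $f^-\le f\le f^+$ with $f^\pm\in C_c$ and $\int(f^+-f^-)$ small, prove your Riemann-sum statement for continuous compactly supported functions, and then pass to the limit --- this is precisely the role that the paper's step-function approximation on rods plays, and omitting it leaves the proof incomplete. Two smaller points: the ``direct $L^1$-bound on the Siegel transform over the intermediate annulus'' is asserted but not supplied (one can get it by the same partial-summation estimate $\sum_{C\le\|\boldsymbol{v}\|\le e^{\delta t}}\phi_t(\|\boldsymbol{v}\|) \lesssim e^{(\delta-d)t}$, but you should say so); and you should note explicitly that $g$, hence $H$ away from $\boldsymbol{0}$, inherits whatever regularity is needed for the Riemann sum to converge only after the sandwiching step, since Riemann-integrability of $f$ does not by itself pass to the fibre integral $g$.
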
 

\noindent We note that the above theorem is reminiscent of~\cite[Theorem 3.4]{EMM} of Eskin-Margulis-Mozes, but the compact group and the one-parameter diagonal subgroup used there are different. In fact, a proof of this theorem in this spirit can be assembled from the work of Kleinbock-Margulis~\cite[Appendix]{KM}, but we present an elementary proof relying on counting lattice points in balls. This  proof is adapted from~\cite[Section~5.1]{MS3}, where the result is proved for balls around the origin (for the slightly different context of a cut-and-project quasicrystal).\footnote{For an introduction to~\cite{MS3}, see~\cite{MS1}.  Also see~\cite{MS2} and~\cite{MS4}.}  For the proof of Theorem~\ref{theorem:siegel:equidist}, we must adapt this proof for balls not containing the origin.  This is done in Section~\ref{secSiegelEquidisProof} for the upper bound:

\begin{Theorem}\label{theorem:siegel:equidistUpper}
Let $f$ be a bounded function of compact support in  $\R^{d+1}$ whose set of discontinuities has zero Lebesgue measure.  Then for any $\La \in X_{d+1}$, $$ \lim_{t \to \infty} \int_{K_{d+1}} \widehat{f}(g_t k \La) ~\wrt k \leq \int_{X_{d+1}}\widehat{f}~\wrt \mu.$$ 
\end{Theorem}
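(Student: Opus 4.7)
The plan is to adapt the lattice-point counting strategy of Marklof-Str\"ombergsson~\cite{MS3}, which handles indicator functions of balls centered at the origin, to compactly supported $f$ whose support need not contain $\mathbf{0}$. By Fubini's theorem and the $K$-invariance of the uniform probability measure $d\sigma$ on $\s^d \subset \RR^{d+1}$,
\[
\int_{K_{d+1}} \widehat{f}(g_t k \La)\, \wrt k \;=\; \sum_{v \in \La \setminus \{\mathbf 0\}} F_t(\|v\|), \qquad F_t(r) := \int_{\s^d} f(r\, g_t u)\, d\sigma(u).
\]
Since $\det g_t = 1$, polar coordinates in $\RR^{d+1}$ give the $t$-independent identity
\[
\int_{\RR^{d+1}} f\, \wrt x \;=\; \omega_d \int_0^\infty F_t(r)\, r^d\, \wrt r,
\]
where $\omega_d = (d+1) V_{d+1}$ is the surface area of $\s^d$ and $V_{d+1}$ is the volume of the unit ball. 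Viewing the lattice sum as a Riemann-Stieltjes integral $\int_0^\infty F_t(r)\, dN^*_\La(r)$ against the counting function $N^*_\La(r) := \#\{v \in \La \setminus\{\mathbf 0\} : \|v\| \le r\}$, the discrepancy between the two sides becomes $\int_0^\infty F_t(r)\, d\Delta_\La(r)$ with $\Delta_\La(r) := N^*_\La(r) - V_{d+1} r^{d+1}$, and for every lattice $\La$ one has $\Delta_\La(r)/r^{d+1} \to 0$ as $r \to \infty$ by the elementary counting $N^*_\La(r) \sim V_{d+1} r^{d+1}$, together with the uniform bound $|\Delta_\La(r)| \le V_{d+1} r^{d+1}$ valid for all $r \ge 0$.

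For the upper bound, I would first reduce to indicator functions. The zero-Lebesgue-measure discontinuity hypothesis lets one sandwich, for each $\varepsilon > 0$, $f \le \phi = \sum_i c_i\, \mathbf{1}_{\mathcal{C}_i}$ with $c_i > 0$, $\int \phi < \int f + \varepsilon$, and each $\mathcal{C}_i$ a bounded set with piecewise smooth boundary. Since $\widehat{f} \le \widehat{\phi}$ pointwise and the sum is finite, the problem reduces to proving $\limsup_{t \to \infty} \int_{K_{d+1}} \widehat{\mathbf{1}}_\mathcal{C}(g_t k \La)\, \wrt k \le \vol(\mathcal{C})$ for each such $\mathcal{C}$. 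For $f = \mathbf{1}_\mathcal{C}$ one has $F_t(r) = \mu_r(g_{-t}\mathcal{C})$, the normalized surface measure on $r\s^d$ of the elongated ellipsoidal set $g_{-t}\mathcal{C}$. Integration by parts then expresses the error as $-\int_0^\infty \Delta_\La(r)\, d\mu_r(g_{-t}\mathcal{C})$, with boundary terms vanishing because $\mathcal{C}$ is bounded (so $\mu_r = 0$ for large $r$) and $\Delta_\La(0) = 0$.

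The main technical obstacle, and the step requiring genuine adaptation of~\cite{MS3}, is controlling this error when $\mathcal{C}$ need not contain $\mathbf{0}$. In the MS3 setting with $\mathcal{C}$ a ball around the origin, $r \mapsto \mu_r(g_{-t}\mathcal{C})$ is non-increasing, so $-d\mu_r(g_{-t}\mathcal{C})$ is a probability measure, and one combines the decay $\Delta_\La(r)/r^{d+1} \to 0$ with the $t$-independent moment identity $\int_0^\infty r^{d+1}\, d(-\mu_r(g_{-t}\mathcal{C})) = (d+1)\vol(\mathcal{C})/\omega_d$. Splitting the integration range at a large fixed $R_\varepsilon$, the contribution from $r \le R_\varepsilon$ is bounded by $V_{d+1}\int_0^{R_\varepsilon} r^{d+1}\, d(-\mu_r(g_{-t}\mathcal{C}))$, which tends to $0$ as $t \to \infty$ because the weight $r^{d+1}$ pushes mass outward in the $t$-dependent support of $-d\mu_r$, while the contribution from $r > R_\varepsilon$ is bounded by $\varepsilon \cdot (d+1)\vol(\mathcal{C})/\omega_d$. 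When $\mathcal{C}$ is bounded away from $\mathbf{0}$, monotonicity of $\mu_r$ fails since the round sphere $r\s^d$ can both enter and exit $g_{-t}\mathcal{C}$ as $r$ grows. I plan to Jordan-decompose the signed measure $d\mu_r(g_{-t}\mathcal{C})$ into its positive and negative parts, each of which has total variation bounded by a $t$-independent constant for $\mathcal{C}$ with piecewise smooth boundary (controlled by the maximal number of transverse intersections of rays through $\mathbf{0}$ with $\partial\mathcal{C}$), and then apply the MS3-style analysis to each monotone piece. Letting $\varepsilon \to 0$ in the initial sandwich completes the argument.
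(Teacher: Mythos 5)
Your overall framework --- rewriting the spherical average as a Riemann--Stieltjes sum $\int F_t\,\wrt{N^*_\La}$, comparing with the volume via the $t$-independent polar identity, and controlling the error through $\Delta_\La(r)/r^{d+1}\to 0$ --- matches the strategy the paper adapts from Marklof--Str\"ombergsson, and the reduction to indicators by sandwiching is fine. But the step where you pass from balls at the origin to general sets has two real gaps. First, the ``uniform bound $|\Delta_\La(r)|\le V_{d+1}r^{d+1}$ valid for all $r\ge 0$'' is false: only $\Delta_\La(r)\ge -V_{d+1}r^{d+1}$ holds automatically, and a unimodular lattice with short vectors has $N^*_\La(r)\gg r^{d+1}$ over a range of small $r$. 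This forces the small-$r$ contribution to be bounded instead by a lattice-dependent constant times $\int_0^{R_\varepsilon}|\wrt{F_t}|$, and making that vanish as $t\to\infty$ requires addressing the case where $g_{-t}\mathcal{C}$ approaches the origin; the paper does this by invoking the systole of $\La$ (there is a ball $B_{\Euc}(\boldsymbol{0},\tau_0)$ disjoint from $\La\setminus\{\boldsymbol{0}\}$, so the counting side of the Stieltjes integral only starts at $\tau_0$).

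Second, and more seriously, bounded total variation of the Jordan parts is not the estimate your $R_\varepsilon$-split needs. The large-$r$ contribution is controlled by $\varepsilon\int_{R_\varepsilon}^\infty r^{d+1}\,|\wrt{F_t}|(r)$, a weighted moment, and a $t$-independent bound on $\int|\wrt{F_t}|$ gives no $t$-independent bound on $\int r^{d+1}\,|\wrt{F_t}|$, since the support of $|\wrt{F_t}|$ stretches out to $r\sim e^{dt}$. The signed identity $\int r^{d+1}(-\wrt{F_t})=\vol(\mathcal{C})/V_{d+1}$ controls only the difference of the positive- and negative-part moments, not each separately; showing that $\int r^{d+1}(\wrt{F_t})^+$ is small is exactly where the geometry of $\mathcal{C}$ enters. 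The paper sidesteps the Jordan decomposition entirely by working with a specific family of sets --- ``rods,'' i.e.\ sup-norm balls in $\RR^d\times\RR$ --- and proving that after lengthening each rod by $O(R^{-(2d+1)})$ to absorb the end caps, the cap-area function $A_R^E(\tau)$ is \emph{strictly decreasing} on its support, so the positive-variation part vanishes identically. This is the content of the four-case analysis (rod centered at $\boldsymbol{0}$, on $\spn(e_{d+1})$, on $\spn(e_1,\dots,e_d)$, or elsewhere), together with a Vitali-covering step-function approximation by such rods. Your Jordan-decomposition route could in principle work, but you would still need to establish the positive-part weighted moment bound, which amounts to redoing that case analysis.
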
 

\begin{rema}
Since $\RR^{d+1}$ is $\sigma$-compact, it follows immediately from the theorem that the assumption that $f$ has compact support can be replaced with that of $f \in  L^1(\bb R^{d+1})$---the other assumptions are still, however, necessary for the proof.
\end{rema}

\begin{coro}
Let $f$ be a bounded Riemann-integrable function of compact support in  $\R^{d+1}$.  Then for any $\La \in X_{d+1}$, $$ \lim_{t \to \infty} \int_{K_{d+1}} \widehat{f}(g_t k \La) ~\wrt k \leq \int_{X_{d+1}}\widehat{f}~\wrt \mu.$$ 
\end{coro}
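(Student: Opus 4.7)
The plan is to deduce this corollary directly from Theorem~\ref{theorem:siegel:equidistUpper} by verifying that its hypotheses are automatically satisfied in this setting. Specifically, I would invoke the Lebesgue criterion for Riemann integrability: a bounded function $f$ on a compact set (in particular, on the compact closure of its support) is Riemann integrable if and only if its set of discontinuities has Lebesgue measure zero. Since $f$ is assumed bounded, Riemann-integrable, and compactly supported on $\R^{d+1}$, it falls squarely within the hypotheses of Theorem~\ref{theorem:siegel:equidistUpper}.

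Concretely, I would let $B \subset \R^{d+1}$ be a closed ball containing $\supp(f)$. Then $f$ restricted to $B$ is a bounded Riemann-integrable function, hence its set of discontinuities in $B$ has Lebesgue measure zero. Outside of $\supp(f)$, the function is identically zero and so discontinuities of $f$ as a function on $\R^{d+1}$ can only occur inside $B$; in particular the total discontinuity set of $f$ on $\R^{d+1}$ has Lebesgue measure zero. Thus $f$ satisfies the hypotheses of Theorem~\ref{theorem:siegel:equidistUpper}, and applying that theorem yields the claimed inequality
\[ \lim_{t \to \infty} \int_{K_{d+1}} \widehat{f}(g_t k \La) ~\wrt k \leq \int_{X_{d+1}}\widehat{f}~\wrt \mu \]
for every $\La \in X_{d+1}$. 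There is no real obstacle here beyond correctly citing Lebesgue's criterion; the content of the statement lies entirely in Theorem~\ref{theorem:siegel:equidistUpper}, whose proof is deferred to Section~\ref{secSiegelEquidisProof}.
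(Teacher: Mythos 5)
Your proposal is correct and matches the paper's proof exactly: the paper simply says ``Immediate from the theorem and the Lebesgue criterion,'' which is the same argument you spell out in more detail.
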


\begin{proof}
Immediate from the theorem and the Lebesgue criterion.
\end{proof}

In this paper, we will focus on upper bounds, i.e. on the proof of Theorem \ref{theorem:siegel:equidistUpper}. The lower bound, on the other hand, follows either from the methods in \cite{KleinMarg} or by applying the following  equidistribution theorem (Theorem~\ref{theorem:DRS}) of Duke, Rudnick and Sarnak (cf. \cite{DRS}) (a simpler proof was given by Eskin and McMullen \cite{EMc} using mixing and generalized by Shah \cite{Shah}) and then approximating the Siegel transform $\widehat{f}$ from below by $h \in C_c(X_{d+1})$. 
\begin{Theorem}\label{theorem:DRS}
Let $G$ be a non-compact semisimple Lie group and let $K$ be a maximal compact subgroup of $G$. Let $\Gamma$ be a lattice in $G$, let $\lambda$ be the probabilty Haar measure on $G/\Gamma$, and let $\nu$ be any probability measure on $K$ which is absolutely continuous with respect to a Haar measure on $K$. Let $\{a_n\}$ be a sequence of elements of $G$ without accumulation points. Then for any $x \in G/\Gamma$ and any $h \in C_{c}(G/\Gamma)$,
$$ \lim_{n \to \infty} \int_{K} h(a_n k x)~\wrt\nu(k) = \int_{G/\Gamma}h~\wrt \lambda.$$ 
\end{Theorem}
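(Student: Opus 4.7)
The plan is to adopt the mixing-based proof of Eskin--McMullen rather than the spectral/Hecke-operator approach of the original Duke--Rudnick--Sarnak paper, as it delivers exactly the generality needed with relatively little machinery. The starting point is the Howe--Moore matrix coefficient decay theorem applied to the regular representation of $G$ on $L^2_0(G/\Gamma)$: since $G$ is a non-compact semisimple Lie group (compact factors act trivially on $G/\Gamma$ and may be absorbed) and $\{a_n\}$ has no accumulation points, for all $\phi_1,\phi_2\in L^2(G/\Gamma)$,
$$\int_{G/\Gamma}\phi_1(a_n y)\overline{\phi_2(y)}\,\wrt\lambda(y)\;\longrightarrow\;\int_{G/\Gamma}\phi_1\,\wrt\lambda\cdot\overline{\int_{G/\Gamma}\phi_2\,\wrt\lambda}.$$
This is the mixing input I will use to convert the spherical $K$-average at the base point $x$ into a genuine $G/\Gamma$-average.

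The bridge between the two averages is the \emph{wave front lemma} of Eskin--McMullen: for every $\epsilon>0$ there is a neighborhood $V$ of the identity in $G$ such that for all $n$ and all $k\in K$, $a_n k V\subseteq V_\epsilon\, a_n k$, where $V_\epsilon$ is a prescribed $\epsilon$-neighborhood of $e$. Using this, I fix a non-negative bump $\psi\in C_c(G)$ supported in $V$ with $\int_G\psi\,\wrt g=1$, insert $\psi$ trivially into the $K$-integral, and apply the wave-front property together with the uniform continuity of $h$ to obtain
$$\int_K h(a_n k x)\,\wrt\nu(k)=\int_K\!\int_G h(a_n k g x)\psi(g)\,\wrt g\,\wrt\nu(k)+O(\epsilon),$$
since the wave-front bound forces $h(a_n k g x)=h(a_n k x)+O(\epsilon)$ uniformly in $(n,k)$ whenever $g\in V$. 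The absolute continuity of $\nu$ on $K$ then lets me fold the double integral into a single $G$-integral $\int_G h(a_n y x)\Psi(y)\,\wrt y$ against a non-negative $\Psi\in C_c(G)$ of total mass $1$; periodizing by $\Gamma$ and lifting $x$ to $G$, this becomes $\int_{G/\Gamma}h(a_n z)\widetilde\Psi(z)\,\wrt\lambda(z)$ for a bounded $L^1$ density $\widetilde\Psi$.

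Applying the mixing statement from the first paragraph to this last integral gives convergence to $\int_{G/\Gamma}h\,\wrt\lambda$, and letting $\epsilon\to 0$ completes the proof. The main obstacle is the wave front lemma itself: choosing $V$ so that the conjugate $a_n k g k^{-1} a_n^{-1}$ stays in $V_\epsilon$ for all $n$ requires adapting $V$ to a Cartan decomposition $a_n=k_n^{(1)} d_n k_n^{(2)}$ so that transverse perturbations by $g$ are absorbed into the $K$-factors on either side of $d_n$, leaving only the bounded $A$-component of the conjugation to control. This is also precisely where the absolute continuity of $\nu$ is needed: without it the folding step would produce only a singular measure on $G$, and the mixing argument above, which is valid for $L^2$ densities, would not apply to yield convergence against continuous compactly supported $h$.
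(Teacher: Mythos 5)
The paper itself does not prove Theorem~\ref{theorem:DRS}; it cites Duke--Rudnick--Sarnak and points to the mixing proof of Eskin--McMullen, so you are reconstructing a referenced argument rather than comparing with the authors' own. Your chosen route (Howe--Moore mixing on $L^2_0(G/\Gamma)$, thicken the spherical orbit, fold into a $G$-integral, apply mixing) is indeed the Eskin--McMullen strategy and is the right one. However, the form of the wave front lemma you state is wrong, and the error propagates through the estimates. The lemma does \emph{not} give $a_n k V\subseteq V_\epsilon\,a_n k$: since conjugation by $a_n$ expands transverse directions, $a_n v a_n^{-1}$ is unbounded in $n$ for a fixed $v\neq e$. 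What the lemma actually gives is an extra $K$-factor on the right, $gV\subseteq V_\epsilon\,g\,K$ uniformly in $g$, so that $a_n k v = u\,(a_n k)\,k_0$ with $u\in V_\epsilon$ small but $k_0\in K$ depending on $n$, $k$, $v$. A direct computation in $G=\SL_2(\R)$ with $a_n=\mathrm{diag}(e^n,e^{-n})$ and $v$ a small upper-triangular unipotent shows that $k_0$ is a rotation by an angle comparable to the size of $v$ and is nonzero whenever $v\neq e$. Consequently your claimed bound $h(a_n k g x)=h(a_n k x)+O(\epsilon)$ is false; the correct statement is $h(a_n k g x)=h(a_n k\,k_0\,x)+O(\epsilon)$, and the residual $k_0$ must still be removed.

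Removing that $k_0$ is where the $K$-average and the absolute continuity of $\nu$ genuinely enter, but not in the way you claim. It is better to place the thickening \emph{between} $a_n$ and $k$, i.e.\ consider $\int_K\int_G h(a_n v k x)\,\psi(v)\,\wrt v\,\wrt\nu(k)$. Then the wave front lemma applied to $a_n$ alone gives $a_n v = u\,a_n\,k_0$ with $u$ and $k_0$ depending only on $(n,v)$, not on $k$, so the inner $K$-integral becomes $\int_K h(a_n k_0 k x)\,\wrt\nu(k) = \int_K h(a_n k x)\,\wrt\big((L_{k_0})_*\nu\big)(k)$. For Haar $\nu$ this equals the original by left-invariance; for $\nu=\rho\,\wrt k$ one needs $\|(L_{k_0})_*\nu - \nu\|_{TV}=\|\rho(k_0^{-1}\cdot)-\rho\|_{L^1(K)}\to 0$ as $k_0\to e$, which is continuity of translation in $L^1$ --- this is the actual role of absolute continuity, and it is where the hypothesis is sharp (the statement fails for $\nu=\delta_e$). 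By contrast, the folding step you attribute it to works for \emph{any} probability $\nu$: pushing $\psi\,\wrt g\otimes\wrt\nu$ forward under multiplication always yields a bounded, compactly supported (hence $L^2$) density, because $\psi$ already contributes a density; the resulting $\Psi(y)=\int_K\psi(k^{-1}y)\,\wrt\nu(k)$ is bounded by $\|\psi\|_\infty$ regardless of $\nu$. With the corrected wave front lemma, the thickening repositioned to sit between $a_n$ and $K$, and $L^1$-continuity of translation used to dispose of $k_0$, the rest of your mixing argument closes the proof.
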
 

\begin{rema}
One can replace $\wrt k$ by $\wrt\nu(k)$ in Theorems~\ref{theorem:siegel:equidist}~and~\ref{theorem:siegel:equidistUpper} without any changes to the proofs.

\end{rema}

\subsection{Proof of Theorem~\ref{main}}  We prove Theorem~\ref{main} using Theorem~\ref{theorem:siegel:equidist}, while deferring the proof of the latter to Section~\ref{secSiegelEquidisProof}.  Thus, applying Theorem~\ref{theorem:siegel:equidist} to characteristic functions of $R_{A, \epsilon}$ and $R_{\epsilon}$, we obtain

$$ \lim_{t \to \infty} \int_{K} \widehat{{\bf 1}}_{R_{A, \epsilon}}(g_t k \Lambda)\wrt \nu(k) =\int_{X_{d+1}} \widehat{{\bf 1}}_{R_{A, \epsilon}}\wrt \mu = \vol(R_{A, \epsilon}),  $$

\noindent where we have applied Siegel's mean value theorem in the last equality.\footnote{There are two parameters $0<r_1 < r_2$, easy to compute and depending on $\varepsilon$, such that, if we define the set $W$ to be the union of all intervals with a terminal point on the sphere of radius $r_1$ in $\RR^d \times \{0\}$, the other terminal point on the sphere of radius $r_2$ in $\RR^d \times \{0\}$, lying in a ray emanating from the origin of $\RR^d \times \{0\}$, and passing through a point of $A$, then we may regard the set $R_{A, \epsilon}$ as the region under the graph of the continuous function $v_2 = \frac{c}{\|\boldsymbol{v}_1\|^d}$ over $W$ union a cylinder over the sphere of radius $r_1$.  Since $\partial(A)$ has zero Lebesgue measure (on $\s^{d-1}$), so does $\partial(W)$ (on $\RR^d$) using polar coordinates for Lebesgue measurable functions.  As $R_{A, \epsilon}$ is bounded, it has finite Lebesgue measure.  Applying Fubini's Theorem and noting that the cylinder over the sphere is a Jordan set, one sees that the compact set  $\partial(R_{A, \epsilon})$ has zero Lebesgue measure (on $\RR^{d+1}$).  It is easy to see that a compact set of zero Lebesgue measure also has zero Jordan content.  Therefore, the set $R_{A, \epsilon}$ is a Jordan set.  Likewise, for $R_{\epsilon}$.   It is well-known that the characteristic functions of Jordan sets are Riemann-integrable and, thus, Theorem~\ref{theorem:siegel:equidist} applies to these functions.}  We apply this to numerator as well as denominator in (\ref{main-1}) to get

$$ \lim_{T \to \infty} \frac{\int_{K}N(k^{-1}\La, A, \epsilon, T)~\wrt k}{\int_{K} N(k^{-1}\La, \epsilon, T)~\wrt k}  = \frac{\vol(R_{A, \epsilon})}{\vol(R_{\epsilon})} = \vol(A), $$

\noindent which finishes the proof.\qed\medskip

\section{Proof of Theorem~\ref{theorem:siegel:equidistUpper}}\label{secSiegelEquidisProof}  As mentioned, to prove Theorem~\ref{theorem:siegel:equidist}, we need only show the upper bound (Theorem~\ref{theorem:siegel:equidistUpper}):  \begin{equation}
 \lim_{t \to \infty} \int_{K_{d+1}} \widehat{f}(g_t k \La) ~\wrt k \leq \int_{X_{d+1}}\widehat{f}~\wrt \mu.  
\end{equation}
We will approximate using step functions on balls (see Section~\ref{secApproxSimpleFcts}), where we use the norm on $\R^{d+1} = \R^d \times \R$ given by the supremum of Euclidean norm in $\R^d = \mbox{span}(e_1, \cdots, e_d)$ and by absolute value in $\R = \mbox{span}(e_{d+1})$.  Hence, balls will be open regions of $\RR^{d+1}$ that are rods (i.e. solid cylinders).  We need four cases:  balls centered at $\boldsymbol{0} \in \RR^{d+1}$, balls centered in $\spn(e_{d+1}) \backslash\{\boldsymbol{0}\}$, balls centered in $\spn(e_1, \cdots, e_d) \backslash\{\boldsymbol{0}\}$, and all other balls.  Since we will approximate using step functions, it suffices (as we show in Section~\ref{secApproxSimpleFcts}) to assume that the balls in the second case do not meet $\boldsymbol{0}$ and in the last case do not meet $\spn(e_{d+1}) \cup \spn(e_1, \cdots, e_d)$.\footnote{We note that the second and the fourth cases already suffice to show Theorem~\ref{main}.}    Let $E := B(\boldsymbol{w}, r)$ be any such ball and $\chi_E$ be its characteristic function.  By the monotone convergence theorem, we have \[\int_{K_{d+1}} \widehat{\chi}_E(g_t k \La) ~\wrt k =\sum_{\boldsymbol{v} \in \La \backslash \{\boldsymbol{0}\} }\int_{K_{d+1}} \chi_{k^{-1} g_t^{-1} E}(\boldsymbol{v}) ~\wrt k.\]  

We show each case in turn.  For the first case, we refer the reader to~\cite[Section~5.1]{MS3}, in which the desired result (with balls given by the Euclidean norm in $e_1, \cdots, e_{d+1}$) is shown for quasicrystals and is essentially the same for us.  (Alternatively, a simplified version of the proof of the third case will also show the first case.)  The proofs of the other three cases adapt this basic idea.  For convenience of exposition, we show the fourth case before the third.

\subsection{The second case:  balls centered in $\spn(e_{d+1}) \backslash\{\boldsymbol{0}\}$}  In this case, $\boldsymbol{w} = w  e_{d+1}$ for some $w \neq 0$.  The proof is similar for whether $w$ is positive or negative, so we may assume without loss of generality that $w >0$.  Let \[\widetilde{B}^d := \widetilde{B}^d(r):=\{(x_1, \cdots, x_d)^t \mid {R^2}{x_1^2} + \cdots +{R^2}{x_d^2} < r^2\}\] and \[I(w) := I(w,r):= \{x_{d+1} \mid w-r < \frac{x_{d+1}}{R^d} < w+r \}\]  where $R := e^t$.  Then $g_t^{-1} E$ is the rod given by \[\widetilde{B}^d \times I(w).\]  Replacing the $< r^2$ with $= \rho^2$ for a $0\leq \rho^2<r^2$ and using the equation for the sphere $\tau \s^d$ where $\tau >0$, we note that the intersection has at most two values for the $x_{d+1}$.  Moreover, since we only need to consider balls not meeting $\boldsymbol{0}$, we may assume that the rod completely lies in the half-space determined by $e_{d+1}$ and thus $x_{d+1} = \sqrt{\tau^2 - \rho^2/R^2}=:c(\rho)$ is our value of intersection provided that it lies in $I(w)$.  Therefore, the intersection $\mathfrak{C}(\tau)$ of the rod with the sphere is a $d$-dimensional \textit{cap,} through which each intersection (i.e. slice) with the affine hyperplane through and normal to $c(\rho) e_{d+1}$ is a $d-1$-dimensional sphere (with radius $\rho/R$).  There are two types of caps.  A \textit{full cap} is a cap such that \begin{equation}\label{eqnCaps}c(\rho) \vert_{0 < \rho^2<r^2} \subset I(w).\end{equation}  We remark that the continuity of the function $c$ means that (\ref{eqnCaps}) is interval inclusion.  Otherwise, a cap is called an \textit{end cap} because it is near one or the other end of the rod.  Now let $\tau_-:= R^d (w - r)$ and $\tilde{\tau}_+ := R^d (w + r)$.  Then, for $0< \tau < \tau_-$ and $\tau_+ :=  \sqrt{\tilde{\tau}_+^2 + r^2/R^2}< \tau$, the intersection between sphere and rod is empty.  Since whether a cap is full or end depends only on (\ref{eqnCaps}), $\mathfrak{C}(\tau)$ are all full caps for $\tilde{\tau}_-:= \sqrt{\tau_-^2 + r^2/R^2}< \tau < \tilde{\tau}_+$.

We are only interested in the case where $R$ is large (and where $r$ is small).  Since \begin{align*} \tilde{\tau}_- - \tau_- & = O(R^{-(d+2)}) \\ \tau_+ - \tilde{\tau}_+ &= O(R^{-(d+2)}),\end{align*} the end caps are negligible (as we shall see below).  We remark that these estimates also give the approximate ``depth'' of any full cap.  

For $R$ large, the $d$-dimensional volume of a full cap is nearly, but slightly larger than, that of the $d$-dimensional ball $\mathfrak{B}$ with boundary sphere exactly the slice through $c(r)$.  More precisely, \begin{align}\label{eqnCapSliceRatio}\frac{\vol_{\tau\s^d}(\mathfrak{{C}(\tau))}}{\vol_d(\mathfrak{B})}  \searrow \gamma(\tilde{\tau}_+) \geq 1\end{align} as $\tau \nearrow \tilde{\tau}_+$ over the interval $(\tilde{\tau}_-,\tilde{\tau}_+)$.  (Note that $\gamma(\tilde{\tau}_+) \searrow 1$ as $\tilde{\tau}_+ \nearrow \infty.$)

Let $\tilde{K}_d:= \begin{pmatrix}\SO_d(\bb R) & 0\\ 0 & 1 \end{pmatrix}$.  Then $g_t^{-1} E$ is stabilized by every element of $\tilde{K}_d$.   It is well-known that the unit sphere $\s^d$ can be realized as the homogeneous space $  \tilde{K}_d \backslash K_{d+1}$.  Consequently, \begin{align}\label{eqnInvarofRotationonSphere}\int_{K_{d+1}} \chi_{k^{-1} g_t^{-1} E}(\boldsymbol{v}) ~\wrt k & = \frac 1 {\vol_{\s^d}(\s^d)}\int_{\s^d} \chi_{s^{-1} g_t^{-1} E}(\boldsymbol{v}) ~\wrt \vol_{\s^d}(s) \nonumber\\ & =\frac {\vol_{\s^d}(\s^d \cap \|\boldsymbol{v}\|^{-1}g_t^{-1} E)}{\vol_{\s^d}(\s^d)} =: A_R^E(\|\boldsymbol{v}\|)\end{align} where the second equality follows from the correspondence $s\boldsymbol{v} \in g_t^{-1} E \iff s \boldsymbol{v}/\|\boldsymbol{v}\| \in \|\boldsymbol{v}\|^{-1}g_t^{-1} E \cap \s^d.$  The invariance of the ratio of the volume measures on spheres of different radii implies that \[A_R^E(\tau) = \frac {\vol_{\tau \s^d}(\tau \s^d \cap g_t^{-1} E)}{\vol_{\tau\s^d}(\tau\s^d)}\] for $\tau >0$.  By (\ref{eqnCapSliceRatio}) and the fact that the full caps are determined by polynomial equations,  $A_R^E(\tau)$ is a strictly decreasing smooth function with respect to $\tau$ over the interval $(\tilde{\tau}_-,\tilde{\tau}_+)$.  

To take care of the end caps, we shall replace our rod with a slightly larger one.  More precisely, replace $I(w,r)$ with $I(w, r+ \frac1 {R^{2d+1}})$, which, for $R$ large enough, lengthens the rod enough (see estimates above) so that all caps of the shorter rod are now full caps of the longer rod.  In particular, $A_R^E(\tau)$ is strictly decreasing over the interval $(\tau_-,\tau_+)$.

Let $B_{\Euc}(\boldsymbol{0}, \tau)$ denote a ball of radius $\tau$ in $\RR^{d+1}$ with respect to the Euclidean norm.  Now it follows from the formula for $A_R^E$ that \[\sum_{\boldsymbol{v} \in \La \backslash \{\boldsymbol{0}\} } A_R^E(\|\boldsymbol{v}\|) \leq \int_{\tau_-}^{\tau_+} \#\big(B_{\Euc}(\boldsymbol{0}, \tau) \cap\La \backslash \{\boldsymbol{0}\}\big)   ~(-\wrt A_R^E(\tau))\] where the integral is the Riemann-Stieltjes integral.  We remark that the integrability of the function $\#\big(B_{\Euc}(\boldsymbol{0}, \tau) \cap\La \backslash \{\boldsymbol{0}\}\big)$ follows from its monotonicity.  

Now the well-known generalizations of the Gauss circle problem (or, alternatively,~\cite[Proposition~3.2]{MS3}) show that \begin{align}\label{eqnGaussCircle}\#\big(B_{\Euc}(\boldsymbol{0}, \tau) \cap\La \backslash \{\boldsymbol{0}\}\big) & \leq (1+ \varepsilon)  \vol(B_{\Euc}(\boldsymbol{0}, \tau))\nonumber \\ &= (1+ \varepsilon)  \vol(B_{\Euc}(\boldsymbol{0}, 1)) \tau^{d+1} \end{align} for $\varepsilon \rightarrow 0$ as $R \rightarrow \infty$ (and hence as $\tau_- \rightarrow \infty$).  

Also, by (\ref{eqnCapSliceRatio}), we have that $C(\tau_-):=\vol_{\tau\s^d}(\mathfrak{{C}(\tau_-))} \rightarrow \vol_d(\mathfrak{B})$ as $R \rightarrow \infty$ (and hence as $\tau_- \rightarrow \infty$).  And we have \[A_R^E(\tau) \leq \frac{C(\tau_-)}{\tau^d{\vol_{\s^d}(\s^d)}}\]  over the interval $(\tau_-,\tau_+)$.  Consequently, it follows that \begin{align*}\sum_{\boldsymbol{v} \in \La \backslash \{\boldsymbol{0}\} } A_R^E(\|\boldsymbol{v}\|) &\leq d \int_{\tau_-}^{\tau_+}  (1+ \varepsilon)  \vol(B_{\Euc}(\boldsymbol{0}, 1))\frac{C(\tau_-)}{{\vol_{\s^d}(\s^d)}} ~\wrt \tau \\ & = d (1 + \varepsilon) \frac {\vol(B_{\Euc}(\boldsymbol{0}, 1))}{{\vol_{\s^d}(\s^d)}} C(\tau_-) (\tau_+ - \tau_-). \end{align*}  Now $C(\tau_-) (\tau_+ - \tau_-)$ is the volume of a rod that has length within $O(\frac 1 {R^{d+1}})$ of the length of our original rod, but with cross-section volume $C(\tau_-)$.  Let $R \rightarrow \infty$ and $\varepsilon \rightarrow 0$, we have \begin{align*}\sum_{\boldsymbol{v} \in \La \backslash \{\boldsymbol{0}\} } A_R^E(\|\boldsymbol{v}\|) &\leq \vol(E),\end{align*} as desired.  (Note that $ \frac {\vol(B_{\Euc}(\boldsymbol{0}, 1))}{{\vol_{\s^d}(\s^d)}}= \frac 1 {d+1}.$)

\subsection{The fourth case:  all other balls}  The proof of this case is similar to the second case.  The differences are as follows.  In this case, $g_t^{-1} E$ is no longer invariant under every element of $\widetilde{K}_d$; however, we need this property only to show (\ref{eqnInvarofRotationonSphere}), which also follows because $\wrt vol_{\s^d}(s)$ is preserved under rotations.  Caps are no longer such simple geometric objects (their boundaries are ellipsoids, not spheres); however, the proof is unaffected by this change.

\subsection{The third case:  balls centered in $\spn(e_1, \cdots, e_d) \backslash\{\boldsymbol{0}\}$}  The difference between this case and the fourth case is that the ends of the rod do not both go to $w e_{d+1}$ for $w \rightarrow \infty$ or both for $w \rightarrow -\infty$, but one end goes to one direction and the other goes to the other.  To take care of this issue, we must consider what happens near the origin. Fix the lattice $\Lambda$.  Then, by the discreteness of the lattice, there is a ball $B_{\Euc}(\boldsymbol{0}, \tau_0))$ that does not meet $\Lambda \backslash \{\boldsymbol{0}\}$ for some $\tau_0>0$ depending only on $\Lambda$.  Since there are no relevant lattice points in any ball around the origin of radius smaller than $\tau_0$, we need only consider $\tau \geq \tau_0$.  Note that the rod and sphere meet in two connected components, each of which we will refer to as caps.  This fact, however, does not affect the proof, except in minor ways as noted below.

Now recall the center of our ball $E$ is $\boldsymbol{w}$, whose last coordinate is $w_{d+1}=0$.  Thus, $g_t^{-1} \boldsymbol{w} \rightarrow \boldsymbol{0}$.  And since the rod $g_t^{-1}E$ is contracting in $x_1, \cdots, x_d$, our analysis of $A_R^E (\tau)$ for the fourth case also hold in this case for large enough $R$ over the desired range $\tau \geq \tau_0$.  In particular, for large $R$, we have that $A_R^E(\tau)$ is a smooth decreasing function of $\tau$ over $(\tau_0, \tau_+)$ such that \[A_R^E(\tau) \leq \frac{C(\tau_0)}{\tau^d{\vol_{\s^d}(\s^d)}}.\]  Here $C(\tau_0)$ is the volume of the two caps that is the relevant intersection.  We may take care of end caps as in the previous cases.

Since, for $R$ large, our analysis in the fourth case applies, we have from (\ref{eqnCapSliceRatio}) that $C(\tau_0) = O(R^{-d})$ or, more precisely, \begin{equation}\label{eqnCrossSectVolumeEst} \vol_d (\mathfrak{B}) \leq C(\tau_0) <  \frac {d+1} d \vol_d(\mathfrak{B}).\end{equation}  Here, $\vol_d(\mathfrak{B})$ is twice the volume of the intersection of a $d$-hyperplane normal to $e_{d+1}$ with the rod.  Finally, as in the second case, for every $\varepsilon >0$, there exists a (large) $\tau_1>0$ depending only on $\Lambda$ such that for all $\tau \geq \tau_1$, we have that (\ref{eqnGaussCircle}) holds.  Hence, for large $R$, we have, as in the previous cases, \begin{align*}\sum_{\boldsymbol{v} \in \La \backslash \{\boldsymbol{0}\} } A_R^E(\|\boldsymbol{v}\|)  & \leq \int_{\tau_0}^{\tau_+} \#\big(B_{\Euc}(\boldsymbol{0}, \tau) \cap\La \backslash \{\boldsymbol{0}\}\big)   ~(-\wrt A_R^E(\tau)) \\ & \leq const(\tau_1)  \int_{\tau_0}^{\tau_1}  ~(-\wrt A_R^E(\tau)) +  (1+ \varepsilon)  \vol(B_{\Euc}(\boldsymbol{0}, 1)) \int_{\tau_1}^{\tau_+} \tau^{d+1}  ~(-\wrt A_R^E(\tau)) \\ & \leq O(R^{-d}) + d (1 + \varepsilon) \frac {\vol(B_{\Euc}(\boldsymbol{0}, 1))}{{\vol_{\s^d}(\s^d)}}C(\tau_0) (\tau_+ - \tau_1),\end{align*} where the inequality for second integral follows as in previous cases.  Since the function that counts lattice points in larger and larger balls is monotonically increasing, $const(\tau_1):=(1+ \varepsilon)  \vol(B_{\Euc}(\boldsymbol{0}, 1)) \tau_1^{d+1}$, and thus the implicit constant depends only on $r$, $\varepsilon$, $\tau_0$, and $\tau_1$.  Applying (\ref{eqnCrossSectVolumeEst}) and letting $R \rightarrow \infty$ and $\varepsilon \rightarrow 0$ yields the desired result.  (Recall that $\vol_d(\mathfrak{B})$ is twice the volume of the intersection of a $d$-hyperplane normal to $e_{d+1}$ with the rod.)

\subsection{Finishing the proof}\label{secApproxSimpleFcts}  We may now approximate using step functions to obtain Theorem~\ref{theorem:siegel:equidistUpper}.\footnote{The construction presented here is general and may be of independent interest.}  For the convenience of the reader, we give a proof.  Let $F \subset \RR^{d+1}$ be a compact subset with nonempty interior (i.e. $F^\circ \neq \emptyset$) and measure zero boundary (i.e. $\vol(\partial F)=0$).  Recall that our balls, which we have called rods, are given by the Euclidean norm in $e_1, \cdots, e_d$ and by absolute value in $e_{d+1}$---the unit ball defines a norm $\|\cdot\|_{\textrm{rod}}$ on $\RR^{d+1}$.  Let us denote the rods defined in the beginning of this section (Section~\ref{secSiegelEquidisProof}) as case-one, case-two, case-three, or case-four rods, respectively---recall these rods are open sets.  We will refer to a sequence of sets as a \textit{disjoint} sequence if the sets in the sequence are pairwise disjoint.

\begin{lemm}\label{lemmVitialCoveringTheorem}  There exists a disjoint sequence $\{B_n\}_{n=1}^\infty$ of case-one, case-two, case-three, and case-four rods of $\RR^{d+1}$ so that \begin{align*} F^\circ  & \supset \coprod B_n \\ \vol(F) & = \sum \vol(B_n).\end{align*}  Moreover, the radii of all the rods in the sequence may be chosen to be $\leq \eta$ for any choice of $\eta >0$.

\end{lemm}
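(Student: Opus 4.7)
The plan is to recognize this lemma as a direct application of the Vitali covering theorem for balls in the rod norm $\|\cdot\|_{\textrm{rod}}$. Since $\|\cdot\|_{\textrm{rod}}$ is a genuine norm on the finite-dimensional space $\R^{d+1}$, it is equivalent to the Euclidean norm, so its open balls (i.e.\ the rods) satisfy the doubling condition with respect to Lebesgue measure and the classical Vitali theorem applies without modification.

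First I would verify that the family
\[
\mathcal{V} := \{B(\ww, r) : 0 < r \leq \eta,\ B(\ww, r) \subset F^\circ,\ \ww \notin \spn(e_{d+1}) \cup \spn(e_1, \ldots, e_d)\}
\]
is a Vitali cover of $F^\circ \setminus S$, where $S := \spn(e_{d+1}) \cup \spn(e_1, \ldots, e_d)$. Indeed, every $\ww \in F^\circ \setminus S$ is the center of a rod contained in $F^\circ$, and such rods can be chosen of arbitrarily small radius, because $F^\circ$ is open and $\R^{d+1} \setminus S$ is open. Restricting the centers to the complement of $S$ guarantees that every rod in $\mathcal{V}$ is a case-four rod (in the terminology of Section~\ref{secSiegelEquidisProof}), which is more than the lemma requires.

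Next, I would invoke the Vitali covering theorem for Radon measures on $\R^{d+1}$ to extract a countable disjoint subfamily $\{B_n\}_{n=1}^\infty \subset \mathcal{V}$ with
\[
\vol\Bigl((F^\circ \setminus S) \setminus \bigcup_{n} B_n\Bigr) = 0.
\]
Since $\vol(S) = 0$ and, by hypothesis, $\vol(\partial F) = 0$, this yields
\[
\vol(F) \;=\; \vol(F^\circ) \;=\; \vol\Bigl(\bigcup_{n} B_n\Bigr) \;=\; \sum_{n} \vol(B_n),
\]
the last equality by disjointness. The inclusion $\coprod B_n \subset F^\circ$ and the radius bound $r \leq \eta$ are built into the definition of $\mathcal{V}$.

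I do not foresee any real obstacle, since every step reduces to classical measure theory; the only point worth flagging is the verification that Vitali's lemma applies in this norm. This could instead be done by hand by noting that $\|\cdot\|_{\textrm{rod}}$ satisfies $c\|\cdot\|_2 \leq \|\cdot\|_{\textrm{rod}} \leq C\|\cdot\|_2$ for constants $c,C>0$, so each rod is sandwiched between two Euclidean balls of comparable radius; the standard Besicovitch--Vitali argument for Euclidean balls then transfers immediately, and the bounded-overlap constant depends only on $d$.
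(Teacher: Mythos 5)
Your proof takes exactly the paper's route: the paper also deduces this lemma directly from the Vitali covering theorem, citing \cite[Theorem~1.6]{He}, and your observation that $\|\cdot\|_{\textrm{rod}}$ is comparable to the Euclidean norm---so the doubling condition holds and Vitali applies---is precisely the point the paper leaves implicit. The one minor slip is your claim that restricting the centers to $F^\circ \setminus S$ already makes every rod in $\mathcal{V}$ a case-four rod: per the convention stated at the start of Section~\ref{secSiegelEquidisProof}, case-four rods are additionally required not to meet $S = \spn(e_{d+1}) \cup \spn(e_1, \ldots, e_d)$, and a rod centered just off $S$ with radius near $\eta$ can certainly intersect $S$. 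The repair is immediate: add $B(\ww, r) \cap S = \emptyset$ to the defining conditions of $\mathcal{V}$; since $\RR^{d+1} \setminus S$ is open, $\mathcal{V}$ remains a fine cover of $F^\circ \setminus S$ and the rest of your argument goes through unchanged.
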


This lemma follows easily from a classical result:  the Vitali covering theorem (see~\cite[Theorem~1.6]{He} for example).

\begin{theo}[Vitali Covering Theorem]  Let $A$ be a subset of a doubling metric space $(X, \mu)$ and $\mathcal{F}$ be a collection of closed balls centered at $A$ such that \[\inf\{r>0 \ \vert \ \overline{B(a,r)} \in \mathcal{F}\} = 0\] for each $a \in A$.  Then there exists a countable subcollection $\{\overline{B_n}\}_{n=1}^\infty$ of pairwise disjoint closed balls such that \[\mu\bigg(A \backslash \bigcup_{n=1}^\infty \overline{B_n}\bigg)=0.\]
 
\end{theo}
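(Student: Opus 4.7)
The plan is to prove the Vitali covering theorem by the classical greedy construction followed by a $5r$-covering argument, with the doubling hypothesis on $\mu$ converting the covering into a null-set statement.

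First I would reduce to a bounded situation. Writing $A$ as a countable union of pieces each contained in a ball of finite $\mu$-measure (such balls exist by doubling, starting from any fixed ball of finite measure) and treating each piece separately, one may assume $A$ lies in a fixed bounded set. One may further discard from $\mathcal{F}$ all balls of radius larger than a fixed $R>0$, since the fine-cover hypothesis $\inf\{r : \overline{B(a,r)} \in \mathcal{F}\}=0$ survives this discarding at every $a \in A$.

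The heart of the argument is a greedy recursion. At stage $n\geq 1$ set
\[ s_n := \sup\bigl\{ r(B) : B \in \mathcal{F},\ B \cap \overline{B_i} = \emptyset \text{ for all } i < n \bigr\}, \]
and choose $\overline{B_n} \in \mathcal{F}$ disjoint from $\overline{B_1},\dots,\overline{B_{n-1}}$ with $r(B_n) \geq s_n/2$; if no eligible ball exists, the process terminates. I would then establish the key covering lemma: every $a \in A \setminus \bigcup_n \overline{B_n}$ lies in $5\overline{B_m}$ for some $m$, where $5\overline{B}$ denotes the concentric ball of five times the radius. The proof is a pigeonhole argument. Pick any $\overline{B(a,r)} \in \mathcal{F}$. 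If the greedy process terminated at stage $N$, then by the termination condition $\overline{B(a,r)}$ must meet some $\overline{B_i}$ with $i \leq N$. If instead the process ran forever, doubling forces $\sum_n \mu(\overline{B_n}) \leq \mu(\text{bounded set})<\infty$ and hence $r(B_n) \to 0$, so $r \leq s_n \leq 2 r(B_n)$ cannot hold for all $n$. Either way, $\overline{B(a,r)}$ meets some $\overline{B_m}$; letting $m$ be the smallest such index, $\overline{B(a,r)}$ was eligible at stage $m$, so $r(B_m) \geq s_m/2 \geq r/2$, and the triangle inequality places $a$ inside $3\overline{B_m} \subset 5\overline{B_m}$.

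Once the covering lemma is in hand, the doubling property of $\mu$ yields a constant $C$ (depending only on the doubling constant) with $\mu(5\overline{B_n}) \leq C\,\mu(\overline{B_n})$. Since the $\overline{B_n}$ are pairwise disjoint inside a bounded set,
\[ \mu\Bigl( A \setminus \bigcup_{n\leq N} \overline{B_n} \Bigr) \leq \sum_{n > N} \mu(5\overline{B_n}) \leq C \sum_{n > N} \mu(\overline{B_n}) \longrightarrow 0 \]
as $N \to \infty$, proving the theorem for each bounded piece; assembling the disjoint countable families from the countably many pieces into a single disjoint family yields the full statement. The main obstacle I expect is the bookkeeping across pieces: one must ensure that balls selected inside one piece do not intersect balls selected inside another, either by inductively shrinking $\mathcal{F}$ between pieces (removing any ball that meets a previously selected one) or by a Zorn-type maximal-disjoint-family argument applied once globally, taking care that the radius-halving step in the greedy recursion is compatible with the bounded-radius reduction.
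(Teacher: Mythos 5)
This theorem is not proved in the paper at all: it is quoted as a classical result with a pointer to Heinonen's book, so there is no in-paper argument to measure yours against. Your proposal is the standard greedy-selection proof of the Vitali covering theorem for doubling measures, and it is correct in outline, but one step is stated too weakly to support your final estimate. The covering claim you establish --- every $a \in A \setminus \bigcup_n \overline{B_n}$ lies in $5\overline{B_m}$ for \emph{some} $m$ --- does not imply the displayed tail bound $\mu\bigl(A \setminus \bigcup_{n\le N}\overline{B_n}\bigr) \le \sum_{n>N}\mu(5\overline{B_n})$: the index $m$ produced by your pigeonhole argument applied to an \emph{arbitrary} $\overline{B(a,r)}\in\mathcal{F}$ may well be $\le N$, and the full union $\bigcup_m 5\overline{B_m}$ can have large measure, so nothing small comes out. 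What is needed is the refined claim that for each fixed $N$, every $a \in A\setminus\bigcup_{n=1}^{\infty}\overline{B_n}$ lies in $5\overline{B_m}$ for some $m>N$. This follows by first choosing $r$ smaller than the distance from $a$ to the closed set $\overline{B_1}\cup\dots\cup\overline{B_N}$ (positive, since $a$ does not belong to this finite union of closed balls); then the minimal blocking index for $\overline{B(a,r)}$ is forced to exceed $N$. This is exactly where the fineness hypothesis $\inf\{r : \overline{B(a,r)}\in\mathcal{F}\}=0$ does its real work, beyond merely surviving your bounded-radius reduction. With that refinement inserted, the rest of your argument is sound.

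On the reduction to bounded $A$: your worry about cross-piece disjointness is legitimate, and the option of ``inductively shrinking $\mathcal{F}$ between pieces'' is more delicate than it looks --- a point of the second piece can be a limit point of the countable union of balls already selected for the first piece without lying in any of them, and at such a point no small ball of $\mathcal{F}$ survives the shrinking, so fineness is lost on a set that is not obviously null. The clean fix is to decompose $A$ using pairwise disjoint \emph{open} annuli $U_k$ about a fixed basepoint whose boundary spheres are $\mu$-null (all but countably many radii work, since $\mu$ is finite on bounded sets), restrict the cover over $A\cap U_k$ to balls contained in $U_k$ (fineness survives because $U_k$ is open), and run the bounded argument in each $U_k$ separately; the selected families are then automatically disjoint across $k$, and the uncovered set is a countable union of null sets together with the null boundaries.
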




It will soon become apparent that we would like to mimic the construction of the Riemann integral, which, for the functions that we are interested in, is constructed over cubes.  But, instead of cubes, we will use partitions (mod 0) consisting of pairwise disjoint open rods and will use the Vitali covering lemma (stated below).  Let $B_0$ be a large enough open rod, not necessarily of any of the four cases, containing $\supp(f)$ and let $\rho_0$ be its radius.  Let ${\Pa}_0 = \{B_0\}$ be the initial partition.  

\subsubsection{Refinements of partitions}   


We recursively define refinements as follows:  let ${\Pa}_j$ be a partition (mod 0) in which every element of $\Pa_{j-1}$ with radius $> 2^{-j} \rho_0$ is replaced with a sequence of pairwise disjoint open rods as constructed in Lemma~\ref{lemmVitialCoveringTheorem} with $\eta \leq 2^{-j} \rho_0$.\footnote{Note that the many choices that we make in constructing the partition and its refinements do not affect the proof.}   We note that $\Pa_j$ is a countable union of case-one, case-two, case-three, and case-four rods only.  Let us define the \textit{size} of a partition $\Pa_j$ to be the supremum over all radii of elements in $\Pa_j$.  Then the size of $\Pa_j \leq 2^{-j} \rho_0$.  By construction,  we have that \begin{align*}
\bigcup_{B \in \Pa_j} B \subset \bigcup_{B \in \Pa_{j-1}} B
\end{align*} and, by Lemma~\ref{lemmVitialCoveringTheorem}, we have that both sets have Lebesgue measure equal to $\vol(B_0)$ and thus the following set has Lebesgue measure zero \[\D' = \overline{B}_0 \bigg{\backslash} \bigcap_{j=0}^\infty \bigcup_{B \in \Pa_j} B.\]  Now let $\D''$ denote the set of discontinuities of $f$, a set of Lebesgue measure zero by assumption.  Let $\D := \D' \cup \D''$.  Note that $\D$ is subset of $\overline{B}_0$, but it need not be compact.

\subsubsection{Approximating $f$ by step functions}

We now wish to approximate $f$ on the full measure set $\overline{B}_0 \backslash \D$ by step functions over the rods of $\Pa_j$.  Define the step functions on $\overline{B}_0 \backslash \D$ as follows:  

\begin{align*}
f_j := \sum_{B \in \Pa_j} \big(\sup f \big{\vert}_B\big) \chi_{B \backslash \D}.
\end{align*}  For any point of $\overline{B}_0 \backslash \D$, we note that the sum only has one term.  Moreover, it is easy to see that these functions converge Lebesgue-a.e. to $f$:

\begin{lemm}\label{lemmApproxStepFct}
The step functions $f_j \rightarrow f$ for every point of $\overline{B}_0 \backslash \D$. 
\end{lemm}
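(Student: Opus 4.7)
The plan is to combine two facts: the partitions $\Pa_j$ shrink to points (off of a negligible set) while the step function $f_j$ is determined, at each point, by the supremum of $f$ on the unique rod of $\Pa_j$ containing that point. Continuity of $f$ at the point then forces convergence.

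First, I would fix $x \in \overline{B}_0 \setminus \D$. Since $x \notin \D'$, by definition of $\D'$ one has $x \in \bigcap_{j=0}^\infty \bigcup_{B \in \Pa_j} B$. Because each $\Pa_j$ consists of pairwise disjoint open rods, for every $j$ there is a \emph{unique} rod $B_j(x) \in \Pa_j$ with $x \in B_j(x)$; moreover, since each rod of $\Pa_{j-1}$ with radius $> 2^{-j}\rho_0$ is subdivided (mod $0$) into smaller rods, the rod $B_j(x)$ is contained (up to a null set, which is irrelevant since $x$ lies in its interior) in $B_{j-1}(x)$, and the radius of $B_j(x)$ is at most $2^{-j}\rho_0$. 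In particular $\operatorname{diam} B_j(x) \to 0$ as $j \to \infty$.

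Next I would use the continuity of $f$ at $x$: since $x \notin \D''$, for any $\varepsilon > 0$ there exists $\delta > 0$ such that $|f(y) - f(x)| < \varepsilon$ for all $y$ in $\supp(f)$ with $\|y - x\|_{\textrm{rod}} < \delta$ (and the same bound at points $y$ outside $\supp(f)$ with $f(x) = 0$, when applicable). Choosing $j$ large enough that $2^{-j}\rho_0 < \delta/2$ forces $B_j(x) \subset \{y : \|y - x\|_{\textrm{rod}} < \delta\}$, so
\begin{equation*}
\bigl| \sup_{y \in B_j(x)} f(y) - f(x) \bigr| \leq \varepsilon.
\end{equation*}

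Finally, I would unpack the definition of $f_j$: because the rods in $\Pa_j$ are pairwise disjoint and $x \notin \D$, exactly one term of the sum defining $f_j(x)$ is nonzero, namely $f_j(x) = \sup_{y \in B_j(x)} f(y)$. The previous display therefore gives $|f_j(x) - f(x)| \leq \varepsilon$ for all sufficiently large $j$, proving $f_j(x) \to f(x)$. The only place requiring any care is the verification that $x$ really does lie in some rod of \emph{every} $\Pa_j$ (so that $f_j(x)$ is given by a single supremum) and that the diameter of that rod tends to $0$; both are guaranteed by the definition of $\D'$ and the size bound on $\Pa_j$, respectively.
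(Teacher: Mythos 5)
Your proof is correct and supplies exactly the argument the paper leaves implicit (the text only remarks that the convergence ``is easy to see''). The two ingredients you isolate — that $x \notin \D'$ places $x$ in a unique rod $B_j(x)$ of every $\Pa_j$ with radius bounded by $2^{-j}\rho_0$, hence shrinking to $x$, and that $x \notin \D''$ gives continuity of $f$ at $x$ so $\sup f\vert_{B_j(x)} \to f(x)$ — are precisely what is needed, and your handling of why $f_j(x)$ reduces to a single supremum term is correct.
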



\subsubsection{Proof for the case of step functions.}  We now prove Theorem~\ref{theorem:siegel:equidistUpper} for $f_j$.   Take any total ordering of the set $\Pa_j = \{B_n\}_{n=1}^\infty$ and let \[f_{j,n} := \sum_{i=1}^n \big(\sup f \big{\vert}_{B_i}\big) \chi_{B_i \backslash \D}.\]  Then we have that \[\lim_{n \rightarrow \infty} f_{j,n} = f_j,\]  where the limit denotes pointwise convergence over the domain $\overline{B}_0 \backslash \D$.

Let $M := \sup |f\vert_{\overline{B}_0}|$.  Fix a lattice $g_t k \La \backslash \{\boldsymbol{0}\}$.  Since $M \chi_{\overline{B}_0 \backslash \D}\leq M \chi_{\overline{B}_0}$, we have that \[\sum_{\boldsymbol{v} \in g_t k \La \backslash \{\boldsymbol{0}\}} M \chi_{\overline{B}_0 \backslash \D}(\boldsymbol{v} )<\infty\] because the same sum over $\chi_{\overline{B}_0}$ is the number of lattice points in this ball, which is finite.  Now since $|f_{j,n}| \leq  M \chi_{\overline{B}_0 \backslash \D}$, dominated convergence implies that \begin{align}
\sum_{\boldsymbol{v} \in g_t k \La \backslash \{\boldsymbol{0}\}} f_j = \lim_{n \rightarrow \infty} \sum_{\boldsymbol{v} \in g_t k \La \backslash \{\boldsymbol{0}\}} f_{j,n} <\infty. 
\label{eqnCountingPointsFixLattice}\end{align}  Here, more precisely, $\boldsymbol{v} \in g_t k \La \backslash \{\boldsymbol{0}\} \cap \overline{B}_0 \backslash \D$.  Furthermore, we note that (\ref{eqnCountingPointsFixLattice}) holds for every $k \in K_{d+1}$.  Applying dominated convergence again, we conclude \begin{align}\label{eqnLimitStepFct}
 \int_{K_{d+1}} \widehat{f_j}(g_t k \La) ~\wrt k =  \lim_{n \rightarrow \infty} \int_{K_{d+1}} \sum_{\boldsymbol{v} \in g_t k \La \backslash \{\boldsymbol{0}\}} f_{j,n} ~\wrt k.
\end{align}

Now define $\widetilde{f}_{j,n} := \sum_{i=1}^n \big(\sup f \big{\vert}_{B_i}\big) \chi_{B_i}$ and repeating the above with $\widetilde{f}_{j,n}+ M \chi_\D$ (which is dominated by $2 M \chi_{\overline{B}_0}$) in place of $f_{j,n}$ yields \begin{align}\label{eqnBndStepFct}
 \int_{K_{d+1}} \widehat{f_j}(g_t k \La) ~\wrt k \leq  \lim_{n \rightarrow \infty} \int_{K_{d+1}} \sum_{\boldsymbol{v} \in g_t k \La \backslash \{\boldsymbol{0}\}} \widetilde{f}_{j,n} ~\wrt k +  \int_{K_{d+1}} \sum_{\boldsymbol{v} \in g_t k \La \backslash \{\boldsymbol{0}\}} M \chi_\D ~\wrt k.
\end{align}  Note that \[\lim_{n \rightarrow \infty} \widetilde{f}_{j,n}+ M \chi_\D\] is well defined and the limit denotes pointwise convergence on $\overline{B}_0$.


To handle the integral involving the zero Lebesgue measure set $\D$, we proceed as follows.  Recall that Lebesgue measure is outer regular, which applied to $\D$ is the following: \[\vol(\D) = \inf\{\vol(U) \ \vert \ U \supset \D, \ U \textrm{ open}\}.\]  It is easy to see that the subcollection of open rods \[\mathfrak{T}:= \bigg{\{}B\left(x,\frac 1 5 r\right) \mid B(x,r) \textrm{ is a case-one, case-two, case-three, or case-four rod }\bigg{\}}\] is a basis of the usual topology of $\RR^{d+1}$.  Consequently, for every $\gamma'>0$, there exists a family $\{ B'_\alpha \} \subset \mathfrak{T}$ such that 
\begin{align*}
\D \subset \cup B'_\alpha \ \textrm{ and} \ \vol(\cup B'_\alpha) < \gamma'.
\end{align*}

We now require a classical result:  the Vitali covering lemma (see~\cite[Lemma~1.9]{Fa} or~\cite[Theorem~1.16]{He} for example).

\begin{theo}[Vitali covering lemma]\label{theoVitaliCoveringLemma}
 Let $\mathfrak{C}$ be a collection of balls contained in a bounded subset of $\RR^{d+1}$.  Then there exists a finite or countably infinite subcollection of pairwise disjoint balls $\{B_m\}$ such that \[\bigcup_{B \in \mathfrak{C}} B \subset \bigcup_m 5B_m\] where $5 B_m$ is the ball concentric with $B_m$ of $5$ times the radius.  
 
 Moreover, the subcollection $\{B_m\}$ can be chosen so that \[\sum_{m=1}^\infty \chi_{5B_m}(x) \leq C(d)\] where $C$ is a constant depending only on $d$.
\end{theo}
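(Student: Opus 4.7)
I would follow the classical greedy selection by dyadic scales, with an additional refinement to secure bounded overlap. Since $\bigcup_{B \in \mathfrak{C}} B$ is bounded, $R_0 := \sup_{B \in \mathfrak{C}} r(B) < \infty$, and I partition $\mathfrak{C}$ into dyadic tiers $\mathfrak{C}_j := \{B : R_0 \cdot 2^{-j} < r(B) \leq R_0 \cdot 2^{-(j-1)}\}$ for $j \geq 1$. Inductively in $j$, I use Zorn's lemma to pick a maximal pairwise-disjoint subfamily $\mathcal{M}_j \subset \mathfrak{C}_j$ whose elements are also disjoint from everything already chosen in $\mathcal{M}_1 \cup \cdots \cup \mathcal{M}_{j-1}$, and set $\{B_m\} := \bigsqcup_j \mathcal{M}_j$. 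Countability is automatic because each $B \in \mathcal{M}_j$ has Lebesgue volume $\gtrsim_d (R_0 2^{-j})^{d+1}$ and the balls live in a fixed bounded subset of $\RR^{d+1}$.

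The covering $\bigcup_{B \in \mathfrak{C}} B \subset \bigcup_m 5 B_m$ comes from a triangle-inequality check: any $B \in \mathfrak{C}_k$ must meet some already-selected $B_m \in \mathcal{M}_1 \cup \cdots \cup \mathcal{M}_k$ by maximality of $\mathcal{M}_k$, and then $r(B_m) > R_0 2^{-k} \geq r(B)/2$, so for $y \in B$ and $z \in B \cap B_m$,
\[
\|y - c(B_m)\| \leq \|y - z\| + \|z - c(B_m)\| \leq 2 r(B) + r(B_m) < 5 r(B_m),
\]
giving $B \subset 5 B_m$.

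The main obstacle is the bounded-overlap bound $\sum_m \chi_{5 B_m}(x) \leq C(d)$: the naive maximal selection does not in general deliver it. One can arrange pairwise-disjoint balls $B_k$ of geometrically decreasing radius, collinear from a point $x$ at distances $\approx 3\, r(B_k)$, so that $x \in 5 B_k$ for infinitely many $k$. To fix this, I would strengthen the selection rule to require that each newly chosen ball have its center $c(B)$ lying outside the $5$-dilate of every previously chosen ball; this preserves covering (with a slightly larger dilation that can be absorbed back into $5$ by refining the tier ratios), and it forces any two selected centers to be well separated on the scale of their radii. The overlap bound then follows from a two-level argument in $\RR^{d+1}$. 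Fixing $x$ and stratifying $S(x) := \{m : x \in 5 B_m\}$ by dyadic radius scale, the pairwise-center-separation and the Euclidean scaling $\vol(B(0,r)) = c_d r^{d+1}$ give a per-scale bound of $O_d(1)$; the refined exclusion simultaneously caps the number of contributing scales, because once $B_m$ is selected with $x \in 5 B_m$ the set $5 B_m$ occupies a macroscopic neighborhood of $x$ and forbids the centers of subsequent finer-scale balls whose $5$-dilates would also contain $x$. I expect the delicate part to be tuning the dilation constant in the covering step against the exclusion constant in the refinement so that both estimates emerge with the same factor $5$ and a dimension-only bound $C(d)$ --- this is precisely where the doubling property of Lebesgue measure on $\RR^{d+1}$ enters.
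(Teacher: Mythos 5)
The paper does not give a proof of this lemma at all: it simply cites Falconer (Lemma~1.9) and Heinonen (Theorem~1.16). So there is no argument in the paper to compare yours against, and what you have written is an independent attempt to reconstruct the result. Your construction of the countable disjoint family $\{B_m\}$ by dyadic greedy selection, and the triangle-inequality check that gives $B \subset 5B_m$, are exactly the standard proof of the first assertion and are correct; likewise your observation that the naive greedy selection does \emph{not} yield the bounded-overlap conclusion is correct and is the key point here.

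However, the modification you propose to secure bounded overlap has a genuine gap, and it is precisely the one you flag yourself. Suppose in tier $j$ you select only balls whose centers avoid $\kappa B_{m'}$ for all previously chosen $B_{m'}$. Any unselected $B$ that fails this exclusion has $c(B) \in \kappa B_{m'}$ with $r(B_{m'}) \geq r(B)$, which forces $B \subset (1+\kappa) B_{m'}$; so the covering constant produced by your scheme is at least $1+\kappa$. On the other hand, for the exclusion to cap the number of tiers whose $c$-dilates contain a fixed point $x$, you need the inequality $\kappa\, r(B_m) \leq \|c(B_{m'}) - c(B_m)\| < c\,(r(B_m) + r(B_{m'}))$ to yield a lower bound $r(B_{m'}) > \frac{\kappa - c}{c}\, r(B_m)$ with $\kappa > c$, i.e.\ the exclusion radius must strictly exceed the covering dilate. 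Combining $\kappa > c$ with $c \geq 1+\kappa$ is impossible. The same obstruction persists if you replace center-exclusion by full-ball-exclusion from $\kappa B_{m'}$ (which gives covering dilate $2+\kappa$), or by a post-hoc deletion of $B_m$ whenever $c(B_m) \in \kappa B_{m'}$ for a kept larger $B_{m'}$ (chaining gives dilate $\geq 5+\kappa$). In every variant the covering dilate ends up strictly larger than the exclusion constant, so the cross-tier overlap estimate never closes — there is no tuning of $\lambda$ and $\kappa$ that makes both parts come out with the same factor. This is a real gap, not merely a technical inconvenience, and it would need a genuinely different mechanism (for instance a Besicovitch-type argument, which gives bounded overlap but sacrifices disjointness, so it is not a drop-in replacement) to repair. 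You should also be aware that the sources the paper cites most likely state only the $5r$-covering assertion; the pointwise overlap bound $\sum_m \chi_{5B_m} \leq C(d)$ is an extra claim that requires separate justification, and the factor $5$ appearing in both conclusions should not be taken for granted.
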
 

The covering lemma allows us to cover $\D$ by a countable collection of our rods:  

\begin{lemm}\label{lemVitaliCovering}
For every $\gamma>0$, there exists (at most) a countable sequence $\{\widetilde{B}_m\}_{m=1}^\infty$ of case-one, case-two, case-three, and case-four rods such that \begin{align*}
\D \subset \cup_{m=1}^\infty \widetilde{B}_m \ \textrm{ and} \ \sum_{m=1}^\infty \vol(\widetilde{B}_m) < \gamma.
\end{align*}
 
\end{lemm}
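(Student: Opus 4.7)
The plan is to combine the outer regularity observation already recorded immediately before the lemma with the Vitali covering lemma just stated. Given $\gamma>0$, I will aim for $\gamma' := 5^{-(d+1)}\gamma$ in the outer regularity step, which produces a family $\{B'_\alpha\} \subset \mathfrak{T}$ of open rods with $\D \subset \bigcup_\alpha B'_\alpha$ and $\vol\bigl(\bigcup_\alpha B'_\alpha\bigr) < \gamma'$. By the definition of $\mathfrak{T}$, each $B'_\alpha$ is of the form $B(x_\alpha, r_\alpha/5)$, where the dilate $B(x_\alpha, r_\alpha)$ is a case-one, case-two, case-three, or case-four rod.

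Next I apply Theorem~\ref{theoVitaliCoveringLemma} to the collection $\mathfrak{C} = \{B'_\alpha\}$, which is a subcollection of balls contained in the bounded set $\overline{B}_0$ (since $\D \subset \overline{B}_0$). This extracts a finite or countable pairwise disjoint subcollection $\{B_m\} \subset \{B'_\alpha\}$ such that $\bigcup_\alpha B'_\alpha \subset \bigcup_m 5 B_m$. Setting $\widetilde{B}_m := 5 B_m$ then gives a sequence of the required type: $B_m$ lies in $\mathfrak{T}$, and by construction of $\mathfrak{T}$ its $5$-fold dilate is precisely a case-one, case-two, case-three, or case-four rod. The covering statement $\D \subset \bigcup_m \widetilde{B}_m$ is immediate from the two inclusions above.

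For the volume bound I use the elementary scaling for the rod norm: a rod of radius $r$ has $(d{+}1)$-dimensional volume $c_d\, r^{d+1}$, where $c_d = 2\omega_d$ and $\omega_d$ is the volume of the Euclidean unit ball in $\R^d$; hence $\vol(5 B_m) = 5^{d+1}\vol(B_m)$. Since the $B_m$ are pairwise disjoint and contained in $\bigcup_\alpha B'_\alpha$, countable additivity yields
\[
\sum_{m=1}^\infty \vol(\widetilde{B}_m) \;=\; 5^{d+1} \sum_{m=1}^\infty \vol(B_m) \;\leq\; 5^{d+1}\, \vol\Bigl(\bigcup_\alpha B'_\alpha\Bigr) \;<\; 5^{d+1}\gamma' \;=\; \gamma,
\]
which finishes the proof. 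There is really no substantial obstacle: the only points to check are that dilation by $5$ carries members of $\mathfrak{T}$ back to admissible rods (true by the definition of $\mathfrak{T}$) and that the rod-norm volume scales like $5^{d+1}$ (true because dilation scales each coordinate uniformly). Note that the alternative bound via the pointwise estimate $\sum_m \chi_{5B_m}\le C(d)$ from Theorem~\ref{theoVitaliCoveringLemma} would also work, but is not needed here since the disjointness of $\{B_m\}$ already gives the sharper bound.
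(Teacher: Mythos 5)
Your argument is essentially identical to the paper's: apply outer regularity with $\gamma' = 5^{-(d+1)}\gamma$, invoke the Vitali covering lemma to extract a disjoint subfamily $\{B_m\}$, dilate by $5$ to recover admissible rods, and sum volumes using disjointness and the $5^{d+1}$ scaling. The paper leaves the choice of $\gamma'$ implicit, but otherwise the two proofs coincide.
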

\begin{proof}
Let $\{B'_\alpha \}$ (defined above) be $\mathfrak{C}$ and $\{B'_m\}_{m=1}^\infty$ be the subcollection in the Vitali covering lemma.  Let $\widetilde{B}_m := 5 B'_m$.  Then $\{\widetilde{B}_m\}$ is a countable collection of case-one, case-two, case-three, and case-four rods.  Consequently, we have \begin{align*}
\vol(\D) \leq \sum_{m=1}^\infty \vol(\widetilde{B}_m) = \sum_{m=1}^\infty 5^{d+1} \vol(B'_m) =   5^{d+1}  \vol(\cup_{m=1}^\infty B'_m) < 5^{d+1}  \gamma'.
\end{align*}  Our desired result is now immediate.
\end{proof}


Choose $\gamma>0$ small.  To bound the integral involving $\D$ in (\ref{eqnBndStepFct}), we may approximate with the $\{\widetilde{B}_m\}_{m=1}^\infty$ from Lemma~\ref{lemVitaliCovering} as follows.  We have \begin{align*}
0 \leq \int_{K_{d+1}} \sum_{\boldsymbol{v} \in g_t k \La \backslash \{\boldsymbol{0}\}} M \chi_\D ~\wrt k \leq \int_{K_{d+1}} \sum_{\boldsymbol{v} \in g_t k \La \backslash \{\boldsymbol{0}\}} \sum_{m=1}^\infty M \chi_{\widetilde{B}_m}  ~\wrt k
\end{align*} because \[M \chi_\D (\boldsymbol{v}) \leq M \chi_{\cup_m \widetilde{B}_m}(\boldsymbol{v})\leq\sum_{m=1}^\infty M \chi_{\widetilde{B}_m}(\boldsymbol{v})\] for all $\boldsymbol{v}$.  Now, replacing all instances of our use of the dominated convergence theorem by the monotone convergence theorem in the argument that we used to show (\ref{eqnLimitStepFct}), we have that \begin{align*}
\int_{K_{d+1}} \sum_{\boldsymbol{v} \in g_t k \La \backslash \{\boldsymbol{0}\}} \sum_{m=1}^\infty M \chi_{\widetilde{B}_m}  ~\wrt k =  \lim_{N \rightarrow \infty} \sum_{m=1}^N M \int_{K_{d+1}} \sum_{\boldsymbol{v} \in g_t k \La \backslash \{\boldsymbol{0}\}} \chi_{ \widetilde{B}_m}  ~\wrt k,
\end{align*} where, a priori, the limit may be infinite---we will, however, show that the limit is finite and can be made arbitrarily small below.

Putting this together with (\ref{eqnBndStepFct}) yields \begin{align}\label{eqnSplitMainandNull}
 \int_{K_{d+1}} \widehat{f_j}(g_t k \La) ~\wrt k & \leq  \lim_{n \rightarrow \infty} \sum_{i=1}^n \big(\sup f \big{\vert}_{B_i}\big) \int_{K_{d+1}} \sum_{\boldsymbol{v} \in g_t k \La \backslash \{\boldsymbol{0}\}} \chi_{B_i} ~\wrt k \\ \nonumber& +\lim_{N \rightarrow \infty} \sum_{m=1}^N M \int_{K_{d+1}} \sum_{\boldsymbol{v} \in g_t k \La \backslash \{\boldsymbol{0}\}} \chi_{ \widetilde{B}_m}  ~\wrt k,
 \end{align} which is an inequality involving only characteristic functions on case-one, case-two, case-three, and case-four rods, and thus our results in the beginning of this section (Section~\ref{secSiegelEquidisProof}) for these rods apply as follows.  

\medskip

\noindent{\bf The main term.}  We now estimate the first term of the right-hand side of (\ref{eqnSplitMainandNull}), which we refer to as the \textit{main term}.  Consider the \textit{ancillary step functions} \[f_{j,n}' := \sum_{i=1}^n \big(\sup f \big{\vert}_{B_i}-M\big) \chi_{B_i}.\]  These $f_{j,n}' $ are dominated by $2M \chi_{\overline{B}_0}$.  By the disjointness of the $\{B_i\}$, the function \[f_j':= \lim_{n\rightarrow \infty} f_{j,n}'\] is well-defined (and the limit denotes pointwise convergence).  Now define \[F_{j,n}:=\sum_{i=1}^n M \chi_{B_i}\] and \[F_j := \lim_{n\rightarrow\infty} F_{j,n},\] which is also well-defined function.  Moreover, they satisfy \begin{align}\label{eqnAncillaryLowBnd}
f_j' =  \lim_{n\rightarrow \infty} \widetilde{f}_{j,n}  - F_j.
\end{align}  We may apply the proof that we used to derive (\ref{eqnLimitStepFct}) with $f_j'$ in place of $f_j$ and $f_{j,n}'$ in place of $f_{j,n}$ to deduce \begin{align*}
 \int_{K_{d+1}} \widehat{f_{j \ }'}(g_t k \La) ~\wrt k =  \lim_{n \rightarrow \infty} \int_{K_{d+1}} \sum_{\boldsymbol{v} \in g_t k \La \backslash \{\boldsymbol{0}\}} f_{j,n}' ~\wrt k = \lim_{n \rightarrow \infty}  \int_{K_{d+1}} \widehat{f_{j,n}'}(g_t k \La) ~\wrt k.
\end{align*}  Moreover, these ancillary step functions satisfy \[f_{j,n}' \geq f_{j,n+1}' \geq \cdots \geq -2 M \chi_{\overline{B}_0}\] for all $n$ and, consequently, we have \begin{align*}
 \int_{K_{d+1}} \sum_{\boldsymbol{v} \in g_t k \La \backslash \{\boldsymbol{0}\}} f_{j,n}' ~\wrt k \geq \int_{K_{d+1}} \sum_{\boldsymbol{v} \in g_t k \La \backslash \{\boldsymbol{0}\}} f_{j,n+1}'\geq \textrm{const},
\end{align*} which forms a monotonically decreasing sequence of real numbers (for every fixed $t$) and thus converges as $n \rightarrow \infty$ to a limit that is less than any element in the sequence.  Namely, we have that \begin{equation*}
\lim_{t \rightarrow \infty} \lim_{n \rightarrow \infty} \int_{K_{d+1}} \sum_{\boldsymbol{v} \in g_t k \La \backslash \{\boldsymbol{0}\}} f_{j,n}' ~\wrt k \leq  \lim_{t \rightarrow \infty} \int_{K_{d+1}} \sum_{\boldsymbol{v} \in g_t k \La \backslash \{\boldsymbol{0}\}} f_{j,n}' ~\wrt k 
\end{equation*} for every $n$.  Applying our results for characteristic functions on case-one, case-two, case-three, and case-four balls, we have\begin{align*}
\lim_{t \rightarrow \infty}  \int_{K_{d+1}} \widehat{f_{j \ }'}(g_t k \La) ~\wrt k \leq \sum_{i=1}^n \big(\sup f \big{\vert}_{B_i}-M\big) \vol(B_i)
\end{align*} for every $n$.  Letting $n \rightarrow \infty$, we have \begin{align}\label{eqnAncillaryfct}
\lim_{t \rightarrow \infty}  \int_{K_{d+1}} \widehat{f_{j \ }'}(g_t k \La) ~\wrt k \leq \int_{{\bb R}^{d+1}} f_j ~\wrt {\bf v} -\sum_{i=1}^\infty M \vol(B_i)
\end{align}  Here we have used the properties of the Lebesgue integral, the fact that the $\{B_i\}$ are pairwise disjoint, and the fact that $\D$ has zero Lebesgue measure.
 
An easy modification of the above argument for $ F_{j,n}$ in place of $f_{j,n}'$ yields  \begin{align}\label{eqnLowBnd}
\lim_{t \rightarrow \infty}  \int_{K_{d+1}} \widehat{F_j}(g_t k \La) ~\wrt k \leq  \sum_{i=1}^\infty M \vol(B_i).
\end{align}   
Finally, applying (\ref{eqnAncillaryLowBnd}), (\ref{eqnAncillaryfct}), (\ref{eqnLowBnd}), and the fact that the argument used to deduce (\ref{eqnLimitStepFct}) also works for $\widetilde{f}_{j,n}$  implies the following:\begin{align} \label{eqnMainEst}
\lim_{t \rightarrow \infty}\lim_{n \rightarrow \infty} \sum_{i=1}^n \big(\sup f \big{\vert}_{B_i}\big) \int_{K_{d+1}} \sum_{\boldsymbol{v} \in g_t k \La \backslash \{\boldsymbol{0}\}} \chi_{B_i} ~\wrt k \leq  \int_{{\bb R}^{d+1}} f_j ~\wrt {\bf v}.
\end{align}  This handles the main term.
 
\medskip

\noindent{\bf The null term.}  The second term of the right-hand side of (\ref{eqnSplitMainandNull}), we refer to as the \textit{null term} and handle exactly as $F_{j,n}$ to obtain  \begin{align*}
\lim_{t \rightarrow \infty}\lim_{N \rightarrow \infty} \sum_{m=1}^N M \int_{K_{d+1}} \sum_{\boldsymbol{v} \in g_t k \La \backslash \{\boldsymbol{0}\}} \chi_{ \widetilde{B}_m}  ~\wrt k \leq  \sum_{i=m}^\infty M \vol(\widetilde{B}_m),
\end{align*} where we note that it is immaterial that the $\{\widetilde{B}_m\}$ may not be pairwise disjoint by the second part of Theorem~\ref{theoVitaliCoveringLemma}.  Finally, letting $\gamma \rightarrow 0$ in the proceeding (which bounds the null term from above by $0$) and applying (\ref{eqnSplitMainandNull}) and (\ref{eqnMainEst}) yields our desired result:

 \begin{align*}
\lim_{t \rightarrow \infty} \int_{K_{d+1}} \widehat{f_j}(g_t k \La) ~\wrt k \leq  \int_{{\bb R}^{d+1}} f_j ~\wrt {\bf v}.
 \end{align*}  
 
 \begin{rema}
Instead of introducing ancillary step functions, one could use the compactness of $\overline{B}_0$ to obtain a finite subcover of rods in $\Pa_j$ and $\{\widetilde{B}_m\}_{m=1}^\infty$ to approximate $f_j$.
\end{rema}

\subsubsection{Proof for the general case.}  The $f_j$'s are dominated by $M \chi_{\overline{B}_0}$.  Using an analogous argument to that for $f_{j,n}'$, we have that 
\begin{equation*}
\lim_{t \rightarrow \infty} \lim_{j \rightarrow \infty} \int_{K_{d+1}} \sum_{\boldsymbol{v} \in g_t k \La \backslash \{\boldsymbol{0}\}} f_{j} ~\wrt k \leq  \lim_{t \rightarrow \infty} \int_{K_{d+1}} \sum_{\boldsymbol{v} \in g_t k \La \backslash \{\boldsymbol{0}\}} f_{j} ~\wrt k 
\end{equation*} for every $j$.  Now, applying our result for step functions, we have \begin{align*}
\lim_{t \rightarrow \infty} \int_{K_{d+1}} \widehat{f}(g_t k \La) ~\wrt k \leq  \int_{{\bb R}^{d+1}} f_j ~\wrt {\bf v}
\end{align*} for every $j$.  Finally, we apply dominated convergence and Lemma~\ref{lemmApproxStepFct} to obtain our desired result:  \begin{align*}
\lim_{t \rightarrow \infty} \int_{K_{d+1}} \widehat{f}(g_t k \La) ~\wrt k \leq  \lim_{j \rightarrow \infty}  \int_{{\bb R}^{d+1}} f_j ~\wrt {\bf v} = \int_{{\bb R}^{d+1}} f ~\wrt {\bf v}.
\end{align*}

\section{Nonuniform spiraling: proof of Theorem \ref{theomaineg}}\label{sec:nonuniform}

In this section, we prove Theorem \ref{theomaineg} by using continued fractions to construct a family of one-dimensional examples for which the directions are not uniformly distributed.  For higher dimensions, we use non-minimal toral translations as examples.

We can strengthen the conclusion of Theorem~\ref{theomaineg}:

\begin{Theorem}\label{theomainegstronger}
Let $d \geq 1$.  There exists a lattice $\Lambda \in \SL_{d+1}(\bb R) / \SL_{d+1}(\bb Z)$ and a set $A$ of $\bb S^{d-1}$ for which \begin{eqnarray*} \label{eqnmainegbig}
\liminf_{T \rightarrow \infty}  \frac {\#\{\Lambda \cap R_{A, \epsilon, T}\}}{\#\{\Lambda \cap R_{\epsilon, T}\}} > \vol(A)
\end{eqnarray*}
and \begin{eqnarray*} \label{eqnmainegsmall}
\limsup_{T \rightarrow \infty}  \frac {\#\{\Lambda \cap R_{-A, \epsilon, T}\}}{\#\{\Lambda \cap R_{\epsilon, T}\}} < \vol(A)
\end{eqnarray*}
for every $1 > \epsilon\geq0$.

\end{Theorem}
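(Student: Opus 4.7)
My plan is to treat $d=1$ and $d\geq 2$ separately.

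For $d=1$, I would use a continued-fraction construction. Pick an irrational $x=[0;1,N,1,N,\ldots]$ for a large integer $N$ and set $\Lambda=\Lambda_x=h_x\Z^2$. Two classical facts do the work: (i) $|qx-p|<1/(2q)$ forces $p/q$ to be a convergent of $x$, so for $c<1/2$ every Dirichlet approximate is a convergent; and (ii) $q_n|q_nx-p_n|\approx 1/a_{n+1}$. In our construction, $a_{n+1}=N$ for $n$ odd and $a_{n+1}=1$ for $n$ even, so for $c$ chosen in $(1/N, 1/2)$ the Dirichlet approximates coincide with the odd-indexed convergents. Since $\sgn(q_nx-p_n)=(-1)^n$, every such approximate has direction $-1\in\s^0$. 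Taking $A=\{-1\}$ (so $\vol(A)=1/2$), the numerator in the ratio for $A$ equals the denominator for all large $T$, while the numerator for $-A=\{+1\}$ vanishes, so $\liminf=1>\vol(A)$ and $\limsup=0<\vol(A)$ for every $\epsilon\in[0,1)$.

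For $d\geq 2$, I would use a non-minimal toral translation: take $\mathbf{x}=(x,0,\ldots,0)\in\R^d$ with the same $x$ and $\Lambda=\Lambda_\mathbf{x}$. Any lattice vector $(qx-p_1,-p_2,\ldots,-p_d,q)$ in $R_{\epsilon,T}$ must satisfy $\|\mathbf{v}_1\|^dq\leq c$; since any nonzero integer coordinate $p_i$ for $i\geq 2$ contributes at least $1$ to $\|\mathbf{v}_1\|$, we would have $\|\mathbf{v}_1\|^dq\geq\epsilon T>c$ once $T>c/\epsilon$, forcing $p_2=\cdots=p_d=0$. Hence every such lattice point lies in the plane $\spn(e_1,e_{d+1})$ and has direction $\pm e_1\in\s^{d-1}$. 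It remains to quantify the $\pm$-split: the ambient Dirichlet condition becomes the weaker $|qx-p_1|^dq\leq c$, and I would argue that the 1-D sign bias persists by exploiting the dense cluster of semi-convergents between consecutive odd-indexed convergents (produced by the large partial quotients $a_{2k}=N$), all inheriting the negative sign from the nearby convergents. For $N$ sufficiently large, these semi-convergents dominate the total count, giving an asymptotic fraction $\alpha>1/2$ of approximates with direction $-e_1$. Taking $A$ to be the open hemisphere $\{v\in\s^{d-1}:v_1<0\}$ (so $\vol(A)=1/2$ and $-A=\{v:v_1>0\}$), both required inequalities reduce to $\alpha>1/2$, which holds.

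The main obstacle is the quantitative bias for $d\geq 2$. Under the weaker condition $|qx-p_1|^dq\leq c$, many non-convergent pairs $(p_1,q)$ qualify as approximates, and for a generic irrational one would expect the sign distribution of these bulk contributions to be asymptotically balanced. Ensuring that the semi-convergent clusters persistently dominate the sign count for our specific $x$ will require a careful analysis of $\{qx\}$ via continued-fraction identities and discrepancy bounds, to confirm that the negative-signed approximates outnumber the positive-signed ones by the required margin uniformly in $T$.
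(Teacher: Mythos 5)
Your $d=1$ argument takes a genuinely different route from the paper. You use a bounded period-2 continued fraction $x = [0;\overline{1,N}]$ and invoke the classical fact that $|qx - p| < \tfrac{1}{2q}$ forces $p/q$ to be a convergent, so that for $c$ in a suitable window $(\tfrac{1}{N+2},\tfrac12)$ only the odd-indexed, negative-sign convergents lie in $R$, giving $\liminf = 1$ and $\limsup = 0$ directly. The paper instead takes $a_n = 4$ for $n$ odd and $a_n = n^n$ for $n$ even, so the partial quotients preceding the negative-sign convergents grow super-polynomially, and then counts all lattice points of $\Lambda_x$ in $R$ (not just convergents but also the ``in-between'' points) via the circle-rotation structure, to show that the negative-sign count dominates. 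Your route is much shorter; the cost is the $c<1/2$ constraint — the constant $c$ in the definition of $R$ is fixed before the theorem is stated, and for larger $c$ your reduction to convergents fails because even-indexed convergents and non-convergent approximates also enter $R$. The paper's growing-quotient construction is designed so that the negative-sign points overwhelm the others for any fixed moderate $c$.

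For $d\ge 2$, your observation that $\boldsymbol{x}=(x,0,\ldots,0)$ forces every lattice point of $\Lambda_{\boldsymbol{x}}$ in $R_{\epsilon,T}$ (for $T$ large) to have $p_2=\cdots=p_d=0$, hence direction $\pm e_1$, is correct, and it is the degenerate extreme of the paper's mechanism: a non-minimal translation forces the eventual directions onto a fixed lower-dimensional sphere $\s^{d-2}\subset\s^{d-1}$ of measure zero, which for your $\boldsymbol{x}$ collapses to $\s^0=\{\pm e_1\}$. But you then need — and, to your credit, explicitly flag that you cannot yet supply — a quantitative sign bias between $+e_1$ and $-e_1$ under the much weaker condition $|qx-p_1| \le (c/q)^{1/d}$. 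This gap is real and looks hard to close with a bounded-quotient $x$: for $d\ge 2$ the window $(c/q)^{1/d}$ is so much wider than $c/q$ that the solution count up to $T$ grows like $T^{1-1/d}$, is overwhelmingly carried by non-convergent pairs, and the signs of those are plausibly asymptotically balanced by equidistribution of $\{qx\}$. With a $50$--$50$ split your hemisphere $A$ gives $\liminf = \limsup = 1/2 = \vol(A)$, and both strict inequalities fail. The paper's primary $d\ge2$ argument avoids this issue entirely by choosing $A$ with reference to the measure-zero concentration set $\s^{d-2}$ rather than any $\pm$ split; its closing remark that $\boldsymbol{x}=x(1,\ldots,1)$ with the unbalanced $x$ gives a higher-dimensional example is in the spirit of your proposal but asserts the bias without proof and faces the same obstacle you identify.
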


\subsection{Proof of Theorems~\ref{theomaineg} and~\ref{theomainegstronger} in dimension one}\label{secPMEg1D}

We prove Theorem~\ref{theomainegstronger}, which also suffices to show Theorem~\ref{theomaineg}.  We must construct a lattice in $\bb R^2$ and pick a set $A$ of $\bb S^0$ for that lattice.  Let \[A = \{-1\}.\]  To construct the lattice, we construct a number $x \in \bb R \backslash \bb Q$ using continued fractions (see~\cite{Kh} for an introduction) and form the associated unimodular lattice $\Lambda_x$.  At the end, we will note that our method of construction provides a family of numbers, corresponding to a family of lattices, which satisfy the theorem.  Using an analogous construction allows us to consider $A = \{1\}$ too.

Let $x$ be the irrational number between zero and one for which 

\begin{equation*}
	a_n := \begin{cases}
	4 & \text{if } n \text{ is odd,} \\ 
	n^n & \text{if } n \text{ is even}
\end{cases} 
\end{equation*}
is the $n$th continued fraction element (note $n \geq 1$).  Since $x$ is irrational, there is an unique $p \in \bb Z$ for which $|qx -p| < 1/2$, which, by forgetting $p$, we can regard as a rotation of the circle $\bb R / \bb Z$ by the unique representative of $qx$ in the interval $(-1/2, 1/2)$.  And therefore the only lattice points that matter for the region $R$ from (\ref{cone}) are those $(p, q) \in \bb Z \times \bb Z$ coming from this rotation.  Also, since the negation of a lattice point in $R$ stays in $R$, we may, without loss of generality, consider lattice points with $q \in \bb N$.  Finally, since $x$ is positive, our lattice points will have $p \in \bb N \cup \{0\}$.


For some pairs of such $(p,q)$, the ratio $\frac{qx-p} {\|qx-p\|}$ will be $1$ and for others $-1$, which is equivalent to asking whether $(p,q)$ is on one or the other side of the ray starting at the origin and going through $(x, 1)^T$, which is equivalent to asking whether $qx -p >0$ or $qx -p <0$, and which, if, for conciseness, we introduce the notation \[q \cdot x\] to denote the circle rotation above, is equivalent to asking whether $q \cdot x >0$ or $q \cdot x <0$.

Let $p_n/q_n$ denote the $n$th convergent of $x$.  We will use the following well-known facts about continued fractions and circle rotations:

\begin{enumerate}
	\item \label{cfFact1} The rotations $q_{n-1} \cdot x$ and $q_n \cdot x$ alternate in sign.
	\item \label{cfFact2}\begin{align*} p_n &= a_n p_{n-1} + p_{n-2} \\ q_n &= a_n q_{n-1} + q_{n-2}
\end{align*}
\item \[q_n p_{n-1} - p_n q_{n-1} = -1^n\]
\item\label{cfFact4} \[\frac 1 {q_n + q_{n+1}} < |q_n \cdot x| < \frac 1 {q_{n+1}}\]
\item Convergents are best approximates (of the second kind):  \[|q_n \cdot x| < |q \cdot x|\] for all $0< q < q_{n+1}$.
\end{enumerate}

The following is a general fact of the continued fraction of any irrational number:

\begin{lemm} We have \[\frac {a_{n+1}} 2 < \frac{|q_{n-1} \cdot x|} {|q_n \cdot x|}< a_{n+1} +2.\]
\end{lemm}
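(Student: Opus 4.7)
The plan is to derive both inequalities by combining the two-sided estimate $\frac{1}{q_n+q_{n+1}} < |q_n \cdot x| < \frac{1}{q_{n+1}}$ from fact (\ref{cfFact4}) with the recursion $q_{n+1} = a_{n+1} q_n + q_{n-1}$ from fact (\ref{cfFact2}). Both bounds then reduce to elementary manipulations of the quantities $q_{n-1}, q_n, q_{n+1}$, plus the observation that $q_{n-1} < q_n$ (which holds for $n \ge 1$ once the initial convergents are sorted out, and in particular for the sequence $\{a_n\}$ constructed above).

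For the upper bound, I would apply the upper bound of (\ref{cfFact4}) to the numerator and the lower bound to the denominator, getting
\[
\frac{|q_{n-1}\cdot x|}{|q_n\cdot x|} < \frac{1/q_n}{1/(q_n+q_{n+1})} = 1 + \frac{q_{n+1}}{q_n}.
\]
Substituting $q_{n+1} = a_{n+1} q_n + q_{n-1}$ and using $q_{n-1} < q_n$ then gives $\frac{q_{n+1}}{q_n} < a_{n+1} + 1$, and hence the upper bound $a_{n+1}+2$.

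For the lower bound, I would reverse which side of (\ref{cfFact4}) is used on which factor:
\[
\frac{|q_{n-1}\cdot x|}{|q_n\cdot x|} > \frac{1/(q_{n-1}+q_n)}{1/q_{n+1}} = \frac{q_{n+1}}{q_{n-1}+q_n} = \frac{a_{n+1} q_n + q_{n-1}}{q_{n-1}+q_n}.
\]
Dropping the nonnegative term $q_{n-1}$ in the numerator and bounding $q_{n-1}+q_n < 2q_n$ (again via $q_{n-1} < q_n$) yields $a_{n+1}/2$, as wanted.

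I do not expect any serious obstacle: the only point that requires care is verifying $q_{n-1} < q_n$ in the base case(s), since the statement is presented for all $n$ without assumptions on the partial quotients. For the specific continued fraction built in Section~\ref{secPMEg1D} this is automatic because all $a_n \ge 4$, so I would either note this or restrict to $n$ large enough that the strict inequality $q_{n-1} < q_n$ (and hence the stated strict bounds) holds.
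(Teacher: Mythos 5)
Your proof is correct and is exactly the argument the paper intends: the paper's own proof is the one-liner ``Both inequalities follow from Facts~(2) and~(4),'' and you have simply carried out that derivation in detail, including the harmless strict/non-strict subtlety at $n=1$.
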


\begin{proof}
Both inequalities follow from Facts~(\ref{cfFact2}) and (\ref{cfFact4}).
\end{proof}

For our particular number $x$, the lemma implies that 

\begin{coro}\label{coroRatioGaps}
For $n$, an even number, we have \[2 < \frac{|q_{n-1} \cdot x|} {|q_n \cdot x|}< 6\] and, for $n$, an odd number, we have \[\frac {(n+1)^{n+1}} 2 < \frac{|q_{n-1} \cdot x|} {|q_n \cdot x|}< (n+1)^{n+1} +2.\]
\end{coro}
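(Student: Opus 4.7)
The plan is to deduce Corollary~\ref{coroRatioGaps} as an immediate substitution into the preceding Lemma. Since the Lemma provides the two-sided bound
\[
\frac{a_{n+1}}{2} < \frac{|q_{n-1}\cdot x|}{|q_n\cdot x|} < a_{n+1}+2
\]
for an arbitrary irrational, the only thing left to do is to plug in the partial quotients specified in the construction of $x$ according to the parity of $n+1$.

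First, if $n$ is even, then $n+1$ is odd and the construction gives $a_{n+1}=4$. Substituting this into the Lemma yields $4/2 = 2$ on the left and $4+2 = 6$ on the right, which is exactly the first inequality claimed. Second, if $n$ is odd, then $n+1$ is even and $a_{n+1}=(n+1)^{n+1}$; substituting gives the second inequality claimed.

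So the whole proof is a one-liner: apply the Lemma and branch on the parity of $n$. There is no real obstacle here; the work has already been done in establishing the general lemma, which itself followed from the standard continued fraction recursion (Fact~(\ref{cfFact2})) and the two-sided estimate for $|q_n\cdot x|$ (Fact~(\ref{cfFact4})). The only thing worth double-checking is the indexing convention, namely that the partial quotient appearing in the ratio $|q_{n-1}\cdot x|/|q_n\cdot x|$ is indeed $a_{n+1}$ (which is forced by Fact~(\ref{cfFact4}) since $|q_n\cdot x|$ is controlled by $q_{n+1} = a_{n+1}q_n + q_{n-1}$), after which the parity bookkeeping in the definition of $a_n$ translates directly into the two cases stated in the corollary.
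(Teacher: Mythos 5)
Your proof is correct and matches the paper's approach exactly: the paper simply states that the lemma implies the corollary, and your substitution of $a_{n+1}=4$ (when $n$ is even, so $n+1$ is odd) and $a_{n+1}=(n+1)^{n+1}$ (when $n$ is odd, so $n+1$ is even) into the lemma's two-sided bound is precisely the implicit computation. The parity bookkeeping and the observation that the relevant partial quotient is $a_{n+1}$ are both handled correctly.
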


Since $x < 1/2$, we have that $1 \cdot x > 0$.  Using facts about continued fractions, the usual conventions $q_{-1}=0$, $p_{-1}=1$, and that $p_0=0$ by construction, it follows that $q_0 =1$.  Consequently,

\begin{lemm}\label{lemmDirections} For $n$, an even integer, we have \[q_n \cdot x >0 \] and, for $n$, an odd integer, we have \[q_n \cdot x <0.\]
\end{lemm}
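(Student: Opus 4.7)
The plan is to prove this by a short induction on $n$, using Fact (\ref{cfFact1}) (that consecutive errors $q_{n-1}\cdot x$ and $q_n \cdot x$ alternate in sign) together with the base case already set up in the text.

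First I would verify the base case $n=0$. As noted just before the lemma, since $0 < x < 1/2$ we have $1 \cdot x > 0$, and the standard conventions $q_{-1}=0$, $p_{-1}=1$, $p_0=0$ combined with $p_0/q_0$ being the zeroth convergent force $q_0 = 1$. Hence $q_0 \cdot x = 1 \cdot x > 0$, which agrees with the claim for the even index $n=0$.

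Next I would carry out the inductive step. Assume the statement holds for some $n \geq 0$: if $n$ is even then $q_n \cdot x > 0$, and if $n$ is odd then $q_n \cdot x < 0$. By Fact (\ref{cfFact1}), $q_n \cdot x$ and $q_{n+1}\cdot x$ have opposite signs. Therefore if $n$ is even (so $n+1$ is odd), $q_{n+1}\cdot x < 0$; and if $n$ is odd (so $n+1$ is even), $q_{n+1}\cdot x > 0$. This is exactly the claim at level $n+1$, completing the induction.

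There is really no main obstacle here; the lemma is just a bookkeeping consequence of the sign-alternation fact once the base case is pinned down. The only subtle point worth mentioning explicitly is to justify the base computation $q_0 = 1$ for \emph{this} particular $x$ (equivalently, that the zeroth partial quotient of $x$ is $0$), which follows from $0 < x < 1/2 < 1$, and the identification of $1\cdot x$ with the unique representative of $x$ in $(-1/2,1/2)$, which here is just $x$ itself since $0 < x < 1/2$.
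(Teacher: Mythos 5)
Your proof is correct and is essentially the same as the paper's: establish the base case $q_0 \cdot x = 1 \cdot x > 0$ from $0 < x < 1/2$ and $q_0 = 1$, then propagate the sign by the alternation property (Fact~(\ref{cfFact1})). You have merely spelled out the induction that the paper leaves implicit.
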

\begin{proof}
As noted, $q_0 \cdot x >0$.  Fact (\ref{cfFact1}) immediately implies the result.
\end{proof}

Since the denominators of the convergents are strictly increasing, Fact (\ref{cfFact4}) implies that $(p_n, q_n)$ are in $R$ and that, for $n$ large enough, $q_n \cdot x$ is itself a rotation by a small angle (much smaller than angle $x$).  However, there are other lattice points in $R$, which we now describe.

We will count relevant lattice points by induction; it is convenient to induct on $n$, the index of the convergents.  Since we are considering a limit, we may start counting lattice points starting with some large $q_n$ without affecting our result.  Therefore, we may assume that $\frac {1} {q_n}$ is small.  

We are interested in lattice points in $R$.  Recall that these lattice points come from the above-mentioned rotations and hence lattice points in $R$ are equivalent to rotations $q\cdot x$ for which $|q\cdot x |\leq \frac 1 {q}$.  To help us count, let us enlarge the lattices points of interest to those corresponding to \begin{align}\label{eqnBiggerCount} |q \cdot x| \leq \frac 1 {q_{n}} \end{align} for $q_{n} \leq q < q_{n+1}$ and exclude those not in $R$. It follows from Fact (\ref{cfFact4}) that the only lattice points satisfying (\ref{eqnBiggerCount}) from those corresponding to $0 \leq q < q_n$ are $q_{n-1}$ and $2 q_{n-1}$ on one side of $0$, $0$ itself, and one on the other side of $0$, which we will say corresponds to $\tilde{q}$---since convergents are best approximates, we know that $|\tilde{q} \cdot x| > |q_{n-1} \cdot x|$.  

For the initial step of the induction on $n$, we have chosen to ignore the lattice points corresponding to $q_{n-1}$, $2 q_{n-1}$, $0$, and $\tilde{q}$ and, for an induction step, we have already counted the contribution from these points.  It the in-between lattice points corresponding to $q_{n} \leq q < q_{n+1}$ that concern us.  The division algorithm describes all such lattice points  as follows:  $q = m q_n + r$.  The only remainders $r$ of interest are the ones already chosen, namely $q_{n-1}$, $2 q_{n-1}$, $0$, and $\tilde{q}$.  By Fact (\ref{cfFact2}) applied to $x$, there are always at least three in-between points---to be precise, these in-between points for a given remainder $r$ correspond to $\{q_n + r, 2 q_n + r, 3 q_n +r, \cdots\}$.  Moreover, the number of in-between points is either $a_{n+1}-1$ or $a_{n+1}$ depending on the remainder $r$.  If 
$r_1 \cdot x$ and $r_2 \cdot x$ are adjacent on the circle for $0 \leq r_1 \neq r_2 <q_n$, then their in-between lattice points divides the interval formed by $r_1 \cdot x$ and $r_2 \cdot x$ up into equal length pieces with the sole exception of one piece which may be slightly longer---this observation follows from Fact (\ref{cfFact1}) and the fact that convergents are best approximates and rotations are isometries.  

Let us consider these in-between points.  We claim that the only in-between point for the remainder $r = \tilde{q}$ that may be relevant corresponds to $q_n + \tilde{q}$.  First note, by Fact (\ref{cfFact1}), the fact that circle rotation is translation on the abelian group $\bb R / \bb Z$, and that this translation is an isometry, we have that $|(m q_n + \tilde{q}) \cdot x | > |(\ell q_n + \tilde{q}) \cdot x|$ for $a_{n+1} > m > \ell\geq 0$.  For $m \geq 2$, we have \[|(m q_n + \tilde{q}) \cdot x | > \frac 1 {2 q_n}\] by Fact (\ref{cfFact4}), but we also have \[\frac 1 {m q_n + \tilde{q}} < \frac 1 {2 q_n},\] which shows our claim.   The lattice point corresponding to $q_n + \tilde{q}$ is only one point and may be ignored for the limit that we are computing.

We claim that the only in-between point for remainder $r = 2 q_{n-1}$ that may be relevant corresponds to $q_n + 2 q_{n-1}$.  The proof is analogous to that for $
\tilde{q}$.  And the possible relevant lattice point can be ignored for the limit.

For the remaining two remainders, the behaviors differ greatly (by construction) for odd-indexed and even-indexed convergents; we consider these cases separately.

\subsubsection{Odd-indexed convergents.}\label{susubsecOIC}  Let $n$ be odd. We will show that many of the in-between lattice points for the remainder $0$ are in $R$, while very few of the in-between lattice points for remainder $q_{n-1}$ are.  Let us first consider the in-between points for $q_{n-1}$, which correspond to $\{m q_n + q_{n-1}\}$ for $0 < m < a_{n+1}$ by Fact (\ref{cfFact2}).  By Lemma~\ref{lemmDirections}, we have \[0 < (m q_n + q_{n-1}) \cdot x < (\ell q_n + q_{n-1}) \cdot x < q_{n-1} \cdot x\] for $0 <  \ell < m < a_{n+1}$.  Note that we have exactly $M := a_{n+1} -1$ in-between points between $q_{n-1} \cdot x$ and $0$ in the given range.  Since circle rotation by $x$ is an abelian group, these in-between points divide up the interval between $0$ and $q_{n-1}\cdot x$ into equal length segments, except for the segment with $0$ as an endpoint, which is slightly longer.  Thus, we have  \begin{equation}\label{eqnAlmostEvenGaps}
q_{n-1} \cdot x - \frac m {M+1} (q_{n-1} \cdot x) < (m q_n + q_{n-1}) \cdot x
\end{equation}
 for $0 < m \leq M$.  Now to be excluded from $R$, a lattice point must satisfy the following condition \[\frac 1 {m q_n + q_{n-1}} <  (m q_n + q_{n-1}) \cdot x, \] which is satisfied, as one can see by applying Fact (\ref{cfFact4}) to (\ref{eqnAlmostEvenGaps}), if the point satisfies \[\frac 1 {m q_n} < \frac 1 {2 q_n}  \bigg(1 - \frac m {M+1}\bigg).\]  The latter condition, in turn, is equivalent to asking at which values of $m$ is the parabola $-m^2 + (M+1)m - 2(M+1) >0$.  The answer is between the two roots, which, for $M$ large enough, are as close as we like to $2$ and $M-1$.  Since $n$ is odd, $M$ can be chosen large.  Thus, except, possibly, for four in-between points, the rest are excluded from $R$.  We can ignore these four points for computing the limit.
 
Finally, to finish the odd-indexed case, we consider in-between points for $0$. There are $N := a_{n+1}$ of such points in the given range (which divide up the segment between $\tilde{q}\cdot x$ and $0$ into equal length pieces, except for a slightly longer piece with endpoint $\tilde{q}\cdot x$).  From~ (\ref{eqnAlmostEvenGaps}), we have that 
\begin{eqnarray}\label{eqnAlmostEvenGaps2}
	- \frac m {N} (q_{n-1} \cdot x) < (m q_n) \cdot x 
\end{eqnarray}
for $0< m < N$. Now for a lattice point to be in $R$, we need the following condition to hold:  \[-\frac 1 {m q_n} \leq (m q_n) \cdot x,\] which is satisfied, as one can see by applying Fact (\ref{cfFact4}) to (\ref{eqnAlmostEvenGaps2}), if the point satisfies \[-\frac 1 {m q_n} \leq  - \frac m {q_nN}.\]  Let $L_n$ be the number of in-between lattice points for remainder $0$ in $R$.  Our calculation implies that $L_n \geq \lfloor(n+1)^{(n+1)/2} \rfloor$.  

\subsubsection{Even-indexed convergents.}  For any remainder, there are at most $a_{n+1} =4$ in-between points.  All of these can be ignored in the limit calculation.

\subsubsection{Finishing the proof of Theorem~\ref{theomainegstronger}}  Thus, the lattice points that project onto $A$ have count $\sum (L_{2k+1} + P_1)$ where $P_1$ is a natural number $\leq5$.  While, all lattice points in $R$ have count $\sum (L_{2k+1} + P_2)$ where $P_2$ is a natural number $\leq 10$. It is clear that fixing an $\epsilon$ does not affect the proceeding.  Therefore, we have shown that \[\liminf_{T \rightarrow \infty}  \frac {\#\{\Lambda \cap R_{A, \epsilon, T}\}}{\#\{\Lambda \cap R_{\epsilon, T}\}} = 1\] and 
\[\limsup_{T \rightarrow \infty}  \frac {\#\{\Lambda \cap R_{-A, \epsilon, T}\}}{\#\{\Lambda \cap R_{\epsilon, T}\}} =0.\] The theorem is now immediate.

\subsection{Other numbers that satisfy Theorem~\ref{theomainegstronger} in dimension one}  It is clear that our construction of $x$ via continued fractions in the proof of Theorem~\ref{theomainegstronger} is a general construction.  Let $a_n(x_1)$ and $a_n(x_2)$ be the $n$-th elements of the continued fraction expansions of $x_1$ and $x_2$, respectively.  Then we define $x_1 \# x_2$ to be the continued fraction whose elements are \begin{align*} a_{2n}(x_1 \#x_2) :=& a_n(x_1) \\ a_{2n+1}(x_1\#x_2):=& a_n(x_2) \end{align*} for $n \in \bb N \cup\{0\}.$  Since continued fractions are unique, the operation $\#$ is a well-defined (noncommutative) product of real numbers, which we refer to as the {\it continued fraction product}.   It is clear from our construction above that, to satisfy the theorem, the number $x_1 \# x_2$ must have $a_n(x_2)$ grow faster than $a_n(x_1)$ as $n \rightarrow \infty$. Let us refer to such numbers as {\it unbalanced}.

Finally, reversing the order of the continued fraction product for our constructed number $x$ will provide an example of a number satisfying the theorem for $A = \{1\}$.

\subsection{Proof of Theorems~\ref{theomaineg} and~\ref{theomainegstronger} in higher dimensions}   We use the well-known fact:  \begin{lemm} The toral translation by a vector $\boldsymbol{x}=(x_1, \cdots, x_d)^T$ is non-minimal if and only if there exist integers $k_1, \cdots, k_d$ not all zero such that $\sum k_i x_i \in \bb Z$. 
\end{lemm}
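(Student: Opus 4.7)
The plan is to prove the two directions of the equivalence separately: the ``if'' direction via an explicit $T$-invariant continuous function, and the ``only if'' direction via the characters of $\T^d$. Throughout, $T$ denotes the translation $y\mapsto y+\boldsymbol{x}\pmod{\Z^d}$ on $\T^d=\R^d/\Z^d$.

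For the ``if'' direction, assume $(k_1,\ldots,k_d)\in\Z^d\setminus\{0\}$ satisfies $\sum_i k_i x_i\in\Z$. Define
\[
\chi(y_1,\ldots,y_d)\;:=\;\exp\!\bigl(2\pi i\textstyle\sum_j k_j y_j\bigr),
\]
which descends to a continuous function $\T^d\to S^1$. The hypothesis yields $\chi(y+\boldsymbol{x})=\chi(y)\cdot\exp(2\pi i\sum_i k_i x_i)=\chi(y)$, so $\chi$ is $T$-invariant. Since $(k_1,\ldots,k_d)\neq 0$, $\chi$ is nonconstant, so each of its level sets is a proper closed $T$-invariant subset of $\T^d$. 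No orbit can be dense, hence $T$ is non-minimal.

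For the ``only if'' direction, suppose $T$ is non-minimal. Because $T$ is an isometry and a group translation, all orbit closures are translates of the orbit closure of $0$, so the latter is a proper subset; being the closure of a cyclic subgroup, it is in fact a proper closed subgroup $H\subsetneq\T^d$ containing $\boldsymbol{x}$. Applying Pontryagin duality to the nontrivial compact abelian quotient $\T^d/H$ furnishes a nontrivial character of $\T^d$ that is trivial on $H$. Every character of $\T^d$ has the form $\chi(y)=\exp(2\pi i\sum_j k_j y_j)$ for some $(k_1,\ldots,k_d)\in\Z^d$, and nontriviality forces this tuple to be nonzero. Evaluating at $\boldsymbol{x}\in H$ gives $\exp(2\pi i\sum_i k_i x_i)=1$, i.e.\ the desired integer relation.

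The only nonelementary input is the identification of the dual group of $\T^d$ in the converse. If one prefers to avoid Pontryagin duality, an equivalent route is Weyl's equidistribution criterion: the absence of any nontrivial integer relation $\sum_i k_i x_i\in\Z$ forces the sequence $\{n\boldsymbol{x}\bmod\Z^d\}_{n\in\Z}$ to be equidistributed on $\T^d$, hence dense, hence $T$ is minimal. The contrapositive gives the sought relation from non-minimality, and this is the main (and essentially only) obstacle in the argument.
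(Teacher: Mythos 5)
Your proof is correct. Note, however, that the paper does not supply a proof of this lemma at all—it is explicitly invoked as a ``well-known fact'' and used without justification in the higher-dimensional construction for Theorem~\ref{theomainegstronger}. So there is no paper argument to compare against; what you have done is fill in an omitted, standard argument. Both of your routes (the explicit invariant character for ``if,'' and Pontryagin duality or the Weyl criterion for ``only if'') are the textbook approaches, and they are complete: the observation that orbit closures under a group translation are all translates of the orbit closure of the identity correctly reduces non-minimality to the existence of a proper closed subgroup $H\subsetneq\T^d$ containing $\boldsymbol{x}$, and identifying $\widehat{\T^d}\cong\Z^d$ then produces the integer relation. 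Nothing is missing.
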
 

This is example is a simple observation. Let $d \geq 1$.  Let $\boldsymbol{x}$ correspond to a non-minimal toral translation.  Then, it follows that there is a primitive integer lattice vector $\boldsymbol{v}$ perpendicular (with respect to the usual dot product in $\bb R^{d+1}$) to the $d+1$-vector $(\boldsymbol{x}^T, 1)^T$.  And $\boldsymbol{v} \neq (0, \cdots, 0, 1)^T$.  Changing the basis of $\bb Z^{d+1}$ to $\{\boldsymbol{v}, \boldsymbol{v}_2, \cdots, \boldsymbol{v}_{d+1}\}$ allows us to see that $(\boldsymbol{x}^T, 1)^T$ lies in the $X:= \textrm{span}\{\boldsymbol{v}_2, \cdots, \boldsymbol{v}_{d+1}\}$ (thought of as a subspace of $\bb R^{d+1}$).  Now it follows that the basis vectors $\{\boldsymbol{v}, \boldsymbol{v}_2, \cdots, \boldsymbol{v}_{d+1}\}$ determine a parallelepiped of $d+1$-volume equal to $1$.  Hence it follows that the only lattice points of $\bb Z^{d+1}$ closer to $(\boldsymbol{x}^T, 1)^T$ than the Euclidean distance between $v$ and $X$ must lie on $X$.

Now let $Y:=\textrm{span}\{\boldsymbol{e}_1, \cdots \boldsymbol{e}_d\}$ (thought of as a subspace of $\bb R^{d+1}$).  The spaces $X$ and $Y$ do not coincide because their normal vectors are not in the same direction.  Therefore $X \cap Y \cap \bb Z^{d+1}$ is a proper sublattice  of $X \cap \bb Z^{d+1}$ and hence no lattice points in the thinning region $R$ (after becoming thin enough) project onto $\bb S^{d-1}$ outside of this sublattice---the projection is onto a lower dimensional sphere $\bb S^{d-2}$.  This proves the theorem for $d \geq 2$.

For $d =1$, we note that the $(x, 1)^T$ is a rational vector and hence goes through an point of $\bb Z^2$.  It easy to see that all integer lattice points close enough to $(x,1)^T$ lie on the line through it---in this case, there is no projection at all.  This proves the theorem for $d=1$.  

We conclude by remarking that an exaggerated version of the proceeding example is given by taking the unbalanced number $x$ constructed in the proof of Theorem~\ref{theomainegstronger} in dimension one and forming $\boldsymbol{x} = (x, \cdots, x)^T$.  This gives a higher dimensional example satisfying Theorem~\ref{theomainegstronger}.


\end{document}